\documentclass{article}

\PassOptionsToPackage{numbers}{natbib}
\usepackage[preprint]{neurips_2026}

\usepackage[utf8]{inputenc}
\usepackage[T1]{fontenc}
\usepackage{lmodern}
\usepackage{microtype}
\usepackage{hyperref}
\usepackage{url}
\usepackage{booktabs}
\usepackage{amsfonts}
\usepackage{nicefrac}
\usepackage{xcolor}
\usepackage{amsmath,amssymb,amsthm,mathtools,bm}
\usepackage{enumitem}
\usepackage{graphicx}
\usepackage{array}
\usepackage{adjustbox}
\hypersetup{hidelinks}

\newtheorem{theorem}{Theorem}[section]
\newtheorem{lemma}[theorem]{Lemma}
\newtheorem{proposition}[theorem]{Proposition}
\newtheorem{corollary}[theorem]{Corollary}
\theoremstyle{definition}
\newtheorem{definition}[theorem]{Definition}

\newcommand{\RR}{\mathbb R}
\newcommand{\EE}{\mathbb E}
\newcommand{\one}{\mathbf 1}

\title{Order-Sensitive Sequential Interventions on Ideal Lattices}

\author{%
  Dmitry Pasechnyuk-Vilensky\\
  MBZUAI, UAE \\
  \texttt{dmivilensky1@gmail.com} \\
}

\begin{document}

\maketitle

\begin{abstract}
We study sequential interventions under prerequisite constraints. In this setting, admissible intervention sequences are paths in the ideal lattice of a finite prerequisite poset rather than unconstrained action strings. We give an exact local-to-global theory of order sensitivity on this state space. First, we prove that any two admissible paths with the same endpoints differ by a finite sequence of elementary diamond swaps. Second, for edge-additive path valuations, we show that path-independence is equivalent to vanishing diamond curvature, yielding an endpoint potential with a canonical M\"obius parameterization on the ideal lattice. Third, we prove that a local diamond field is induced by an edge-based path model if and only if it satisfies cube consistency, with uniqueness after fixing a reference-tree gauge. Under reduced-state longitudinal assumptions, supported reference paths identify reference-path scores, whereas local order effects require two-sided support of both orders on each diamond. These results yield exact planning consequences, including an order-insensitivity bound and dynamic programming on the truncated ideal lattice.
\end{abstract}

\section{Introduction}

Many sequential intervention problems are constrained by prerequisite structure. In tutoring, remediation steps must respect skill dependencies; in treatment design, later actions may be admissible only after earlier conditions are satisfied; in workflow control, tasks may be executed only after their dependencies are met. In all such cases, the admissible action space is governed by a partial order rather than by unconstrained sequence composition.

We model this structure by a finite poset
\[
P=(S,\preceq).
\]
The natural state space is the ideal lattice \(J(P)\): a state records a downward-closed set of currently satisfied prerequisites, and an admissible action adds one new element whose predecessors are already present. The representation of finite distributive lattices by ideal families is classical \cite{birkhoff1937,davey2002,stanley2011ec1}. Thus admissible intervention sequences are naturally represented as paths in a finite distributive lattice.

The minimal nontrivial instance already contains the main phenomenon. Let \(u\) and \(v\) be incomparable admissible additions above a base ideal \(I\). Then the two admissible paths
\[
I\to I\cup\{u\}\to I\cup\{u,v\},
\qquad
I\to I\cup\{v\}\to I\cup\{u,v\},
\]
share the same endpoint but may induce different downstream value. This rank-two Boolean interval is the elementary {diamond}. The paper proves that all same-endpoint order effects reduce to compositions of such local diamonds.

This reduction is exact. Admissible paths in a fixed interval \([I,J]\) are in bijection with linear extensions of the induced poset on \(J\setminus I\), and the graph of linear extensions is connected by adjacent swaps of neighboring incomparable elements \cite{trotter1992,pruesse1994,stanley2011ec1}. In the ideal lattice, each such swap is exactly a diamond move. This is the basic structural fact behind the paper: local diamond geometry completely controls global order sensitivity between paths with the same endpoints.

This combinatorial fact yields an exact local-to-global theory of order-sensitive sequential interventions on ideal lattices. For a fixed structural context \(x\), a path value is encoded by an edge field \(g_x\) on admissible edges of \(J(P)\). The associated {diamond curvature} \(\kappa_x\) measures the value difference between the two sides of a diamond. Vanishing curvature is exactly the integrable regime: path values depend only on endpoints and are represented by an endpoint potential \(\Phi_x\). This potential admits a canonical M\"obius parameterization in the incidence algebra of \(J(P)\) by standard M\"obius inversion on finite posets \cite{rota1964,stanley2011ec1}.

The second structural pillar is a realizability theorem. A local diamond field \(\kappa\) is globally realizable by an edge-based path model if and only if it satisfies cube consistency. The proof is constructive: after fixing a gauge on a reference tree, the realizing edge field is recovered recursively by peeling off \(<_{\tau}\)-maximal elements. Cube consistency is exactly the compatibility condition that makes this reconstruction well-defined.

The same structural decomposition has a sharp causal consequence. Under standard reduced-state longitudinal assumptions from dynamic treatment regimes and the \(g\)-formula literature \cite{robins1986,robins1987,murphy2003,murphy2005,hernanrobins2020}, supported reference paths identify reference-path scores, whereas local order effects require two-sided support of both local orders on each diamond. This yields a {support separation} theorem: reference-path and order-sensitive components are learnable on different support sets.

\paragraph{Related work and positioning.}
Three lines of work are closest to ours. Knowledge-space and learning-space theories study feasible state families induced by prerequisite relations, but not order-sensitive intervention values on paths through such states \cite{doignon1999,doignon2011}. Dynamic treatment regimes provide the causal and sequential decision-theoretic background, but do not exploit the geometry induced by prerequisite constraints \cite{robins1986,robins1987,murphy2003,murphy2005,qian2011}. Finite distributive lattices, linear extensions, and M\"obius inversion provide the combinatorial substrate \cite{birkhoff1937,davey2002,trotter1992,pruesse1994,rota1964,stanley2011ec1}. Our contribution is an exact theory at this intersection: diamonds generate all same-endpoint order variation, cube consistency characterizes globally realizable local order fields, and these structural quantities have distinct identification requirements.

\paragraph{Contributions.}
The paper makes four contributions. It introduces ideal-lattice path valuation as the natural formal object for prerequisite-constrained sequential interventions. It proves a diamond representation theorem and an exact curvature decomposition for same-endpoint path differences. It proves a realizability theorem for local diamond fields, with cube consistency as the exact global compatibility condition and uniqueness after reference-tree gauge fixing. Finally, it derives a causal support separation theorem and exact planning consequences, including an order-insensitivity bound and dynamic programming on the truncated ideal lattice.

\paragraph{Limitations.}
The theory is developed for finite posets, finite-horizon admissible paths, and edge-additive structural means. The causal results are identification results. Richer state dependence, non-additive path functionals, and continuous action spaces are outside the present scope.

The paper is organized as follows. Section~\ref{sec:state-space} defines ideal-lattice state spaces and admissible paths. Section~\ref{sec:diamonds} proves that same-endpoint path differences are generated by diamonds. Section~\ref{sec:curvature} develops curvature, endpoint potentials, and exact decomposition. Section~\ref{sec:integrability} proves the realizability theorem for local diamond fields. Section~\ref{sec:causal} establishes the support separation theorem under longitudinal causal assumptions. Section~\ref{sec:planning} gives planning consequences. All proofs are deferred to the appendices.

\section{Ideal-Lattice State Space and Admissible Paths}
\label{sec:state-space}

Fix a finite poset
\[
P=(S,\preceq).
\]
Fix once and for all a total order
\[
\tau=(s_1,\dots,s_{|S|})
\]
extending \(\preceq\); write \(u<_{\tau}v\) when \(u\) precedes \(v\) in \(\tau\).

\begin{definition}[Ideals]
An {ideal} of \(P\) is a set \(I\subseteq S\) such that
\[
s\in I,\ u\preceq s \Longrightarrow u\in I.
\]
The set of all ideals is denoted by \(J(P)\), ordered by inclusion.
\end{definition}

\begin{definition}[Admissible additions]
For \(I\in J(P)\), define
\[
\mathcal A(I):=\{a\in S\setminus I:\ \mathrm{Pred}(a)\subseteq I\},
\qquad
\mathrm{Pred}(a):=\{u\in S:\ u\prec a\}.
\]
\end{definition}

\begin{definition}[Admissible paths]
Let \(I,J\in J(P)\) with \(I\subseteq J\). An {admissible path} from \(I\) to \(J\) is a sequence
\[
\gamma=(I_0,\dots,I_L)
\]
such that
\[
I_0=I,\qquad I_L=J,\qquad I_{\ell+1}=I_\ell\cup\{a_\ell\},\quad a_\ell\in \mathcal A(I_\ell).
\]
The set of all such paths is denoted by \(\Gamma(I,J)\). For \(H\in\mathbb N\), let \(\Gamma_H(I,J)\subseteq \Gamma(I,J)\) be the subset of paths of length at most \(H\).
\end{definition}

\begin{definition}[Reference paths]
For \(I\subseteq J\) in \(J(P)\), let
\[
J\setminus I=\{a_1,\dots,a_m\},
\qquad
a_1<_{\tau}\cdots<_{\tau} a_m.
\]
The {reference path} from \(I\) to \(J\) is
\[
\rho_{I,J}:\ 
I
\to I\cup\{a_1\}
\to I\cup\{a_1,a_2\}
\to \cdots \to
I\cup\{a_1,\dots,a_m\}=J.
\]
\end{definition}

\begin{definition}[Edge fields and path valuations]
Fix \(x\) in a structural context space \(\mathcal X\). An {edge field} is a function
\[
g_x:\{(I,a): I\in J(P),\ a\in \mathcal A(I)\}\to \RR.
\]
The induced valuation of an admissible path
\[
\gamma=(I_0,\dots,I_L),
\qquad
I_{\ell+1}=I_\ell\cup\{a_\ell\},
\]
is
\begin{equation}
\textstyle V_x(\gamma):=\sum_{\ell=0}^{L-1} g_x(I_\ell,a_\ell).
\label{eq:path-valuation}
\end{equation}
\end{definition}

A classical fact is that \(J(P)\) is a finite distributive lattice \cite{birkhoff1937,davey2002,stanley2011ec1}. The next section identifies the elementary local moves in this lattice.

\section{Diamonds and Rewriting of Same-Endpoint Paths}
\label{sec:diamonds}

\begin{definition}[Diamonds]
Let \(I\in J(P)\) and let \(u,v\in\mathcal A(I)\) satisfy
\[
u<_{\tau}v,\qquad u\parallel v.
\]
The corresponding {diamond} is denoted by
\[
d=(I;u,v)
\]
and has boundary paths
\[
I\to I\cup\{u\}\to I\cup\{u,v\},
\qquad
I\to I\cup\{v\}\to I\cup\{u,v\}.
\]
\end{definition}

The proof of the next theorem uses the linear-extension representation of admissible paths and adjacent-swap connectivity \cite{trotter1992,pruesse1994,stanley2011ec1}.

\begin{theorem}[Diamond representation]
\label{thm:diamond-representation-main}
Let \(I\subseteq J\) in \(J(P)\), and let \(\gamma,\gamma'\in \Gamma(I,J)\). Then \(\gamma'\) is obtained from \(\gamma\) by a finite sequence of diamond swaps. Equivalently, there exist diamonds
\[
d_1,\dots,d_M
\]
and signs \(\varepsilon_m\in\{\pm1\}\) such that
\begin{equation}
\textstyle \gamma-\gamma'=\sum_{m=1}^M \varepsilon_m\,\partial d_m
\label{eq:diamond-representation}
\end{equation}
as formal \(1\)-chains.
\end{theorem}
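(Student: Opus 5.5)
The plan is to translate admissible paths into linear extensions, invoke adjacent-swap connectivity, and read each swap as a diamond move. First I establish the bijection: for \(I\subseteq J\) in \(J(P)\), let \(Q:=J\setminus I\) with the order induced from \(P\). A path \(\gamma=(I_0,\dots,I_L)\in\Gamma(I,J)\) with added elements \(a_0,\dots,a_{L-1}\) determines the word \(w(\gamma)=a_0a_1\cdots a_{L-1}\). Necessarily \(L=|Q|\), each element of \(Q\) occurs exactly once, and admissibility \(\mathrm{Pred}(a_\ell)\subseteq I_\ell\) means every predecessor of \(a_\ell\) lying in \(Q\) occurs earlier; hence \(w(\gamma)\) is a linear extension of \(Q\). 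Conversely, given a linear extension \(b_1\cdots b_m\) of \(Q\), set \(I_k=I\cup\{b_1,\dots,b_k\}\). Each \(I_k\) is an ideal: a predecessor of \(b_j\) lies either in \(I\), since \(J\) is an ideal and if the predecessor is not in \(J\setminus I\) it lies in \(I\), or in \(Q\), where it precedes \(b_j\). So \(b_{k+1}\in\mathcal A(I_k)\). The two maps are mutually inverse.

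Second, I invoke the classical fact cited before the theorem: the graph on linear extensions of a finite poset, with edges given by transposing two adjacent incomparable entries, is connected. If a self-contained argument is needed, I would induct on the number of pairs ordered differently in \(w\) and \(w'\). If \(w\neq w'\), some adjacent pair \(b_kb_{k+1}\) in \(w\) appears in the opposite order in \(w'\). Such pairs are incomparable, because both words are linear extensions. Swapping them yields a linear extension with strictly fewer inversions relative to \(w'\). This step is where the real content lies, though it is standard.

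Third, I identify a single adjacent swap with a diamond. If \(w\) and \(\tilde w\) differ by transposing \(b_k,b_{k+1}\), the corresponding paths agree except on the segment from \(I_{k-1}\) to \(I_{k+1}\). With \(K:=I_{k-1}\), both \(b_k\) and \(b_{k+1}\) lie in \(\mathcal A(K)\): \(b_{k+1}\) is admissible at \(K\) because \(\mathrm{Pred}(b_{k+1})\subseteq K\cup\{b_k\}\) and \(b_k\not\prec b_{k+1}\). They are incomparable, so after naming them as \(u<_\tau v\) we obtain a diamond \(d=(K;u,v)\) whose two boundary paths are exactly the two differing segments.

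Finally, I handle the chain identity by viewing paths as formal \(1\)-chains, meaning sums of their directed edges, and define \(\partial d\) as the \(u\)-first boundary path minus the \(v\)-first one. For a single swap, \(\gamma-\tilde\gamma=\pm\partial d\), since the shared edges cancel. I then chain the swaps \(\gamma=\gamma_0,\gamma_1,\dots,\gamma_M=\gamma'\) and telescope, \(\gamma-\gamma'=\sum_m(\gamma_{m-1}-\gamma_m)\), which gives \eqref{eq:diamond-representation} with \(\varepsilon_m\) recording which side was traversed first. The only delicate point is fixing the sign convention for \(\partial d\) consistently with \(u<_\tau v\).
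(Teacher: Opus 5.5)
Your proposal is correct and follows the paper's proof essentially step for step: the bijection between \(\Gamma(I,J)\) and linear extensions of \(J\setminus I\), adjacent-swap connectivity, and the identification of each swap with a diamond \((K;u,v)\), with the chain identity obtained by recording orientations. The only difference is cosmetic. You prove connectivity by strictly decreasing the inversion count relative to \(w'\), whereas the paper inducts on \(L\) by bubbling the last element of the target extension to the end; your explicit check that \(b_{k+1}\in\mathcal A(K)\) because \(b_k\not\prec b_{k+1}\) is also slightly more careful than the paper's.
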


\begin{center}
\fbox{
\begin{minipage}{0.93\linewidth}
\textbf{Main Result 1.}
Any two admissible paths with the same endpoints differ by a finite sequence of diamond swaps.
\end{minipage}
}
\end{center}

Theorem~\ref{thm:diamond-representation-main} identifies diamonds as the elementary generators of same-endpoint order variation.

\section{Curvature, Endpoint Potentials, and Exact Decomposition}
\label{sec:curvature}

\begin{definition}[Diamond curvature]
For a diamond \(d=(I;u,v)\) with \(u<_{\tau}v\), define
\begin{equation}
\kappa_x(d)
:=
g_x(I,v)+g_x(I\cup\{v\},u)-g_x(I,u)-g_x(I\cup\{u\},v).
\label{eq:diamond-curvature}
\end{equation}
Equivalently,
\[
\kappa_x(d)
=
V_x\!\left(I\to I\cup\{v\}\to I\cup\{u,v\}\right)
-
V_x\!\left(I\to I\cup\{u\}\to I\cup\{u,v\}\right).
\]
\end{definition}

\begin{theorem}[Path-independence criterion]
\label{thm:path-independence-main}
Fix \(x\in\mathcal X\). The following are equivalent:
\begin{enumerate}
    \item for every \(I\subseteq J\) in \(J(P)\) and all \(\gamma,\gamma'\in \Gamma(I,J)\),
    \begin{equation}
    V_x(\gamma)=V_x(\gamma');
    \label{eq:path-independence-condition}
    \end{equation}
    \item for every diamond \(d\),
    \begin{equation}
    \kappa_x(d)=0;
    \label{eq:zero-curvature-condition}
    \end{equation}
    \item there exists a function \(\Phi_x:J(P)\to\RR\) such that for every admissible edge \(I\to I\cup\{a\}\),
    \begin{equation}
    g_x(I,a)=\Phi_x(I\cup\{a\})-\Phi_x(I).
    \label{eq:endpoint-potential-gradient}
    \end{equation}
\end{enumerate}
In this case,
\begin{equation}
V_x(\gamma)=\Phi_x(J)-\Phi_x(I)
\qquad
\forall \gamma\in \Gamma(I,J).
\label{eq:path-independence-telescoping}
\end{equation}
\end{theorem}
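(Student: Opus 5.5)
We will prove the cycle of implications \(1\Rightarrow2\Rightarrow1\), \(1\Rightarrow3\Rightarrow1\), and read off \eqref{eq:path-independence-telescoping} along the way.

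\(1\Rightarrow2\) is immediate. For a diamond \(d=(I;u,v)\), the two boundary paths both lie in \(\Gamma(I,I\cup\{u,v\})\). The second expression for \(\kappa_x(d)\) in \eqref{eq:diamond-curvature} is exactly the difference of their valuations, so it vanishes by \eqref{eq:path-independence-condition}.

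For \(2\Rightarrow1\) we invoke Theorem~\ref{thm:diamond-representation-main}: \(\gamma'\) is reached from \(\gamma\) by finitely many diamond swaps. It therefore suffices to show that a single swap preserves \(V_x\). A swap replaces a length-two segment \(K\to K\cup\{u\}\to K\cup\{u,v\}\) by \(K\to K\cup\{v\}\to K\cup\{u,v\}\), or the reverse, where \((K;u,v)\) is a diamond. Because \(V_x\) is an edge sum \eqref{eq:path-valuation}, the edges outside the segment contribute identically. The change in value is therefore \(\pm\kappa_x(K;u,v)=0\).

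Alternatively, one can extend \(V_x\) linearly to formal \(1\)-chains and apply it to \eqref{eq:diamond-representation}. This gives
\[
V_x(\gamma)-V_x(\gamma')=\sum_m\varepsilon_m V_x(\partial d_m)=\pm\sum_m\varepsilon_m\kappa_x(d_m)=0.
\]
The only care needed is the orientation convention for \(\partial d\), which must be fixed so that \(V_x(\partial d)=\pm\kappa_x(d)\).

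For \(1\Rightarrow3\), set \(\Phi_x(K):=V_x(\gamma_K)\) for any \(\gamma_K\in\Gamma(\emptyset,K)\), for instance the reference path \(\rho_{\emptyset,K}\).

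Two facts make this work.
\begin{enumerate}
\item Every ideal \(K\) is reachable from \(\emptyset\): adding the elements of \(K\) in \(\tau\)-order is admissible, since \(\tau\) extends \(\preceq\) and \(K\) is downward closed.
\item \(\Phi_x\) is well defined by condition 1 applied to the pair \(\emptyset\subseteq K\).
\end{enumerate}

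Now take an admissible edge \(I\to I\cup\{a\}\). Appending this edge to \(\gamma_I\) gives a path in \(\Gamma(\emptyset,I\cup\{a\})\). Its value is \(\Phi_x(I)+g_x(I,a)\), and by path independence it also equals \(\Phi_x(I\cup\{a\})\). This yields \eqref{eq:endpoint-potential-gradient}.

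Finally, \(3\Rightarrow1\) and \eqref{eq:path-independence-telescoping} follow by telescoping the edge sum along any \(\gamma\in\Gamma(I,J)\):
\[
V_x(\gamma)=\sum_{\ell=0}^{L-1}\bigl(\Phi_x(I_{\ell+1})-\Phi_x(I_\ell)\bigr)=\Phi_x(J)-\Phi_x(I),
\]
which depends only on the endpoints.

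The only substantive step is \(2\Rightarrow1\), and its difficulty is fully absorbed by Theorem~\ref{thm:diamond-representation-main}. The rest is bookkeeping: check that a swap touches exactly one diamond's two boundary edges pairs, and that the sign convention in \eqref{eq:diamond-curvature} matches the chain identity \eqref{eq:diamond-representation}.
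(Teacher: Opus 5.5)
Your proof is correct and follows essentially the paper's argument: telescoping for $(3)\Rightarrow(1)$, the two boundary paths for $(1)\Rightarrow(2)$, and the diamond representation theorem plus appending an edge to a path from $\varnothing$ to build $\Phi_x$. The only difference is organizational. You use Theorem~\ref{thm:diamond-representation-main} to prove $(2)\Rightarrow(1)$ and then build $\Phi_x$ from $(1)$, whereas the paper applies it directly inside $(2)\Rightarrow(3)$ to show $\Phi_x$ is well defined; your explicit check that every ideal is reachable from $\varnothing$ fills in a point the paper leaves implicit.
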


Thus vanishing curvature is exactly the integrable regime.

\begin{definition}[Reference-path scores]
For \(I\subseteq J\), define
\begin{equation}
\Phi_x^\rho(I,J):=V_x(\rho_{I,J}).
\label{eq:reference-path-score}
\end{equation}
\end{definition}

\begin{theorem}[Exact reference-path decomposition]
\label{thm:reference-decomposition-main}
Fix \(x\in\mathcal X\). Let \(I\subseteq J\) in \(J(P)\), and let \(\gamma\in \Gamma(I,J)\). Then
\begin{equation}
\textstyle V_x(\gamma)
=
\Phi_x^\rho(I,J)
+
\sum_{m=1}^M \varepsilon_m\,\kappa_x(d_m),
\label{eq:reference-path-decomposition}
\end{equation}
where \((d_m,\varepsilon_m)_{m=1}^M\) is any diamond-swap sequence taking \(\rho_{I,J}\) to \(\gamma\).
\end{theorem}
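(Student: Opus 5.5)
The plan is to extend \(V_x\) linearly to formal \(1\)-chains on the Hasse diagram of \(J(P)\) and then read off the decomposition from Theorem~\ref{thm:diamond-representation-main}. Put
\[
V_x\Big(\textstyle\sum_e c_e\,e\Big):=\sum_e c_e\, g_x(e)
\]
for finite integer combinations of admissible edges \(e=(I,a)\). Then \(V_x\) is additive on chains, and on a path (regarded as the sum of its edges) it agrees with \eqref{eq:path-valuation}.

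First I fix the orientation convention for \(\partial d\). For \(d=(I;u,v)\) with \(u<_\tau v\), I take
\[
\partial d = \big[(I,v)+(I\cup\{v\},u)\big]-\big[(I,u)+(I\cup\{u\},v)\big].
\]
This is the \(v\)-first side minus the \(u\)-first side, so that \(V_x(\partial d)=\kappa_x(d)\) by \eqref{eq:diamond-curvature}. I choose the signs \(\varepsilon_m\) of a swap sequence \(\rho_{I,J}=\gamma_0\to\gamma_1\to\cdots\to\gamma_M=\gamma\) with the same convention. If the \(m\)-th step replaces a \(u\)-first occurrence of \(d_m\) by the \(v\)-first one, set \(\varepsilon_m=+1\); otherwise set \(\varepsilon_m=-1\).

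The key step is the single-swap identity
\[
\gamma_m-\gamma_{m-1}=\varepsilon_m\,\partial d_m
\]
as chains. A diamond swap changes only the two consecutive edges through the diamond: it replaces one boundary side of \(d_m\) by the other and leaves all other edges of the path untouched. Hence the difference of the two edge multisets is exactly \(\pm\partial d_m\), with the sign fixed above. Telescoping over \(m\) gives
\[
\gamma-\rho_{I,J}=\sum_{m=1}^M\varepsilon_m\,\partial d_m,
\]
which is the content of \eqref{eq:diamond-representation} with the roles of \(\gamma,\gamma'\) arranged so that \(\rho_{I,J}\) is the starting path. Applying the linear map \(V_x\) yields
\[
V_x(\gamma)-V_x(\rho_{I,J})=\sum_{m=1}^M\varepsilon_m\,\kappa_x(d_m).
\]
By \eqref{eq:reference-path-score}, \(V_x(\rho_{I,J})=\Phi_x^\rho(I,J)\), which gives \eqref{eq:reference-path-decomposition}.

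A sequence \((d_m,\varepsilon_m)\) exists by Theorem~\ref{thm:diamond-representation-main}. The statement says ``any'' sequence, and the argument above used only the telescoping property of that particular sequence. So the right-hand side is automatically independent of which sequence is chosen, because it equals \(V_x(\gamma)-V_x(\rho_{I,J})\) in every case. No separate well-definedness check is needed.

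The main obstacle is bookkeeping rather than substance. I have to make the sign convention for \(\partial d\) and \(\varepsilon_m\) match both Theorem~\ref{thm:diamond-representation-main} (which is stated as \(\gamma-\gamma'\)) and the definition of \(\kappa_x\). I also have to check that a swap really alters only the two edges of one diamond, with \(u\parallel v\) both admissible at the intermediate ideal \(I_\ell\). That last point follows because, if consecutive additions \(a_\ell=u\), \(a_{\ell+1}=v\) can be swapped, then \(v\in\mathcal A(I_\ell)\), and \(u\) remains admissible after \(v\) is added.
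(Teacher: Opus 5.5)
Your proof is correct and follows essentially the same route as the paper. The paper proves a swap-sum identity \(V_x(\gamma')-V_x(\gamma)=\sum_{m}\varepsilon_m\,\kappa_x(d_m)\) by induction on \(M\), where each swap contributes \(\varepsilon_m\kappa_x(d_m)\) by the definition of curvature, and then applies it with \(\gamma=\rho_{I,J}\). That is your telescoping argument, carried out directly on values instead of on formal chains before applying the linear map \(V_x\).
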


Theorem~\ref{thm:reference-decomposition-main} is the value-theoretic form of Theorem~\ref{thm:diamond-representation-main}: once same-endpoint path differences are generated by diamonds, path values decompose exactly into a reference-path score plus signed local curvature corrections.

The next statement is standard M\"obius inversion on the finite distributive lattice \(J(P)\) \cite{rota1964,stanley2011ec1}.

\begin{corollary}[M\"obius parameterization of the integrable regime]
\label{cor:mobius-main}
Under the equivalent conditions of Theorem~\ref{thm:path-independence-main}, there exists a unique function \(\theta_x:J(P)\to\RR\) such that
\begin{equation}
\textstyle \Phi_x(I)=\sum_{K\subseteq I}\theta_x(K),
\label{eq:mobius-forward}
\end{equation}
and
\begin{equation}
\textstyle \theta_x(I)=\sum_{K\subseteq I}\mu(K,I)\,\Phi_x(K),
\label{eq:mobius-inverse}
\end{equation}
where \(\mu\) is the M\"obius function of \(J(P)\). In particular,
\begin{equation}
\textstyle g_x(I,a)=\sum_{\substack{K\subseteq I\cup\{a\}\\ a\in K}} \theta_x(K).
\label{eq:mobius-edge}
\end{equation}
\end{corollary}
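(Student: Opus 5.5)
The plan is to work directly in the incidence algebra of the finite poset \(J(P)\), ordered by inclusion, and invoke Möbius inversion \cite{rota1964,stanley2011ec1}. Theorem~\ref{thm:path-independence-main} supplies a potential \(\Phi_x:J(P)\to\RR\). We use it as the function to be inverted, so the only real content is (i) existence and uniqueness of \(\theta_x\), and (ii) the edge formula~\eqref{eq:mobius-edge}.

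\textbf{Step 1: existence and uniqueness.} Let \(\zeta(K,I)=1\) if \(K\subseteq I\) and \(0\) otherwise. Equation~\eqref{eq:mobius-forward} says exactly \(\Phi_x=\theta_x\zeta\), that is, \(\Phi_x(I)=\sum_K\theta_x(K)\zeta(K,I)\).

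Since \(J(P)\) is finite, \(\zeta\) is invertible in the incidence algebra. Its inverse is the Möbius function \(\mu\), which is defined recursively by \(\mu(I,I)=1\) and \(\mu(K,I)=-\sum_{K\subseteq L\subsetneq I}\mu(K,L)\). Hence \(\theta_x:=\Phi_x\mu\) satisfies~\eqref{eq:mobius-inverse}.

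Uniqueness follows from the invertibility of \(\zeta\). Alternatively, one can argue by induction along a linear extension of \(J(P)\). The triangular system determines each value as
\[
\theta_x(I)=\Phi_x(I)-\sum_{K\subsetneq I}\theta_x(K).
\]

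One caveat is that \(\Phi_x\) is determined only up to an additive constant, and \(\theta_x\) is unique only relative to the chosen potential. Changing \(\Phi_x\) by a constant \(c\) changes only \(\theta_x(\emptyset)\), by \(c\), since \(\emptyset\) is the bottom element. I would state this explicitly, or fix the normalization \(\Phi_x(\emptyset)=0\), so that uniqueness is meaningful.

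\textbf{Step 2: the edge formula.} By~\eqref{eq:endpoint-potential-gradient} and~\eqref{eq:mobius-forward},
\[
g_x(I,a)=\Phi_x(I\cup\{a\})-\Phi_x(I)=\sum_{K\subseteq I\cup\{a\}}\theta_x(K)-\sum_{K\subseteq I}\theta_x(K).
\]
Here \(K\) ranges over ideals. An ideal \(K\subseteq I\cup\{a\}\) satisfies \(K\subseteq I\) exactly when \(a\notin K\), because \(a\notin I\). So the second sum cancels precisely the terms of the first sum with \(a\notin K\), and what remains is~\eqref{eq:mobius-edge}.

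The main obstacle is only bookkeeping: all sums must be read over ideals \(K\in J(P)\), not arbitrary subsets. In particular, the sum in~\eqref{eq:mobius-edge} runs over ideals containing \(a\) and contained in \(I\cup\{a\}\). With that convention, no structural property of distributivity is needed beyond finiteness of the poset.
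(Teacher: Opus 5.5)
Your proposal is correct and follows the paper's proof essentially verbatim: invertibility of $\zeta$ in the incidence algebra of $J(P)$ gives existence, uniqueness, and the inversion formula, and subtracting the expansions at $I\cup\{a\}$ and at $I$ cancels exactly the ideals not containing $a$. Your remark that $\theta_x$ is unique only relative to the chosen potential is a correct clarification the paper leaves implicit: a constant shift of $\Phi_x$ moves only $\theta_x(\varnothing)$, which never enters the edge formula.
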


\section{Cube Consistency and Global Realizability}
\label{sec:integrability}

\begin{definition}[Three-cubes]
Let \(I\in J(P)\) and let
$
u<_{\tau}v<_{\tau}w
$
be pairwise incomparable elements of \(\mathcal A(I)\). The corresponding {three-cube} is the Boolean interval
$
[I,\ I\cup\{u,v,w\}],
$
whose six \(2\)-faces are diamonds.
\end{definition}

\begin{definition}[Cube consistency]
A diamond field
$
\kappa:\{\text{diamonds of }J(P)\}\to \RR
$
is {cube-consistent} if for every three-cube \((I;u,v,w)\) with \(u<_{\tau}v<_{\tau}w\),
\begin{equation}
\kappa(I;u,v)
-\kappa(I\cup\{w\};u,v)
+\kappa(I;u,w)
-\kappa(I\cup\{v\};u,w)
+\kappa(I;v,w)
-\kappa(I\cup\{u\};v,w)
=0.
\label{eq:cube-consistency}
\end{equation}
\end{definition}

\begin{proposition}[Discrete Bianchi identity]
\label{prop:bianchi-main}
If \(\kappa_x\) is the curvature field of an edge field \(g_x\), then \(\kappa_x\) is cube-consistent.
\end{proposition}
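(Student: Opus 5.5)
The plan is a direct telescoping computation on the Boolean interval $[I,\,I\cup\{u,v,w\}]$. Fix a three-cube $(I;u,v,w)$ with $u<_{\tau}v<_{\tau}w$. The interval has $8$ vertices and $12$ edges, and its maximal chains correspond to the six orderings $\sigma$ of $\{u,v,w\}$. For each ordering $xyz$ I will write $V(xyz)$ for $V_x$ of the path $I\to I\cup\{x\}\to I\cup\{x,y\}\to I\cup\{u,v,w\}$, as defined in \eqref{eq:path-valuation}. Everything reduces to expressing each of the six diamond curvatures in \eqref{eq:cube-consistency} as a difference of two such chain values. No global result is needed beyond this; in particular I will not invoke Theorem~\ref{thm:diamond-representation-main}, since all paths involved live in a single cube.

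\textbf{Step 1.} For a bottom face, e.g.\ $(I;u,v)$, I append the common third edge $I\cup\{u,v\}\to I\cup\{u,v,w\}$ to both boundary paths in \eqref{eq:diamond-curvature}. Since this edge contributes the same amount $g_x(I\cup\{u,v\},w)$ to both sides, it cancels, giving $\kappa_x(I;u,v)=V(vuw)-V(uvw)$. In the same way $\kappa_x(I;u,w)=V(wuv)-V(uwv)$ and $\kappa_x(I;v,w)=V(wvu)-V(vwu)$.

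\textbf{Step 2.} For a top face, e.g.\ $(I\cup\{w\};u,v)$, I prepend the common first edge $I\to I\cup\{w\}$. This gives $\kappa_x(I\cup\{w\};u,v)=V(wvu)-V(wuv)$, and likewise $\kappa_x(I\cup\{v\};u,w)=V(vwu)-V(vuw)$ and $\kappa_x(I\cup\{u\};v,w)=V(uwv)-V(uvw)$. At this point each face term is a signed difference of two maximal-chain values, and each ordering $\sigma$ occurs in exactly two faces, namely the two faces containing the middle vertex of its chain.

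\textbf{Step 3.} I substitute these twelve terms into the left-hand side of \eqref{eq:cube-consistency} with the prescribed signs and collect the coefficient of each $V(\sigma)$. The proposition holds exactly when all six coefficients vanish, which is the discrete analogue of $d^2=0$.

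\textbf{Main obstacle.} Steps 1--2 are mechanical. The real content, and the step I expect to be hardest, is the sign bookkeeping in Step 3. Each $V(\sigma)$ receives one contribution from a bottom face and one from a top face, and these cancel only if the sign pattern in \eqref{eq:cube-consistency} orients every $2$-face compatibly with the $\tau$-induced orientation of the two chains through it. My first check would be the two orderings $uvw$ and $wvu$, since each of them pairs terms from an adjacent bottom/top pair in the stated formula. I would then repeat the check for the remaining four orderings, $vuw$, $wuv$, $uwv$ and $vwu$.

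If this tally leaves a residue, for example a multiple of $V(vuw)+V(wuv)-V(uwv)-V(vwu)$, I would not change the claimed identity. Instead I would recheck the convention in \eqref{eq:diamond-curvature} for which side is subtracted on the faces $(I;u,w)$ and $(I\cup\{v\};u,w)$, because these are the faces where the $\tau$-orientation and the geometric orientation of the cube can disagree. Once the face orientations are matched, the cancellation is termwise, and cube consistency follows for every three-cube.
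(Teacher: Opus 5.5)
Your Steps 1 and 2 are correct: all six chain expressions agree with \eqref{eq:diamond-curvature}. The failure is in Step 3, and it is not a bookkeeping slip. With the signs printed in \eqref{eq:cube-consistency}, the orderings $uvw$ and $wvu$ cancel, but the other four do not. The left-hand side of \eqref{eq:cube-consistency} collapses to
\[
2\bigl[V(vuw)+V(wuv)-V(uwv)-V(vwu)\bigr].
\]
This is exactly the residue you anticipated, and it is not zero in general. Take $P=\{u,v,w\}$ an antichain with $u<_{\tau}v<_{\tau}w$, $I=\varnothing$, $g_x(\varnothing,u)=1$, and all other edge values $0$. Then $\kappa_x(\varnothing;u,v)=\kappa_x(\varnothing;u,w)=-1$, the other four face curvatures vanish, and the left side of \eqref{eq:cube-consistency} equals $-2$. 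So the proposition cannot be proved as stated, because it is false with cube consistency as printed.

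Your fallback does not rescue the claim, because there is no convention left to recheck. Since $u<_{\tau}w$, \eqref{eq:diamond-curvature} fixes $\kappa_x(I;u,w)=g_x(I,w)+g_x(I\cup\{w\},u)-g_x(I,u)-g_x(I\cup\{u\},w)$, which is exactly what you used. ``Matching the face orientations'' on the $(u,w)$-faces means reversing the signs of those two terms, that is, proving
\[
\kappa(I;u,v)-\kappa(I\cup\{w\};u,v)-\kappa(I;u,w)+\kappa(I\cup\{v\};u,w)+\kappa(I;v,w)-\kappa(I\cup\{u\};v,w)=0,
\]
which is a different identity. That one is the genuine discrete Bianchi identity: the face sign alternates with the $\tau$-position of the frozen coordinate. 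Your chain computation proves it with termwise cancellation. It is also the form actually used in the inductive step of Theorem~\ref{thm:zero-gauge-app}, where $\kappa(K;u,b)$ and $\kappa(K\cup\{v\};u,b)$ enter with signs $-$ and $+$. The paper's own proof expands the six curvatures into edge values and asserts pairwise cancellation, but it fails for the same reason: for instance, $g(I,u)$ carries a minus sign in both the first and third brackets. So the correct outcome of your Step 3 is to report that \eqref{eq:cube-consistency} has a sign error on the $(u,w)$-faces and must be restated with the corrected signs before the proposition holds. You should not assert that the printed identity follows once orientations are matched.
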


\begin{definition}[Reference-tree scores]
For every nonempty ideal \(K\), let \(m(K)\) denote the \(<_{\tau}\)-maximum element of \(K\), and define
$
K^-:=K\setminus\{m(K)\}
$.
A {reference-tree score system} is a function
$
\alpha:J(P)\setminus\{\varnothing\}\to \RR,
\alpha(K):=g(K^-,m(K)).
$
\end{definition}

\begin{theorem}[Integrability with reference-tree gauge]
\label{thm:integrability-main}
Let \(\kappa\) be a cube-consistent diamond field, and let \(\alpha\) be a reference-tree score system. Then there exists a unique edge field \(g\) such that:
\begin{enumerate}
    \item \(g(K^-,m(K))=\alpha(K)\) for every nonempty ideal \(K\);
    \item the curvature field of \(g\) equals \(\kappa\).
\end{enumerate}
\end{theorem}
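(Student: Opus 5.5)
The plan is to build \(g\) by induction on \(|K|\), defining at stage \(K\) all edges \((K\setminus\{b\},b)\) that enter \(K\), and to show that every value is forced; this gives existence and uniqueness together. The edges entering \(K\) are indexed by the \(\preceq\)-maximal elements \(b\) of \(K\). Since \(\tau\) extends \(\preceq\), the element \(m:=m(K)\) is \(\preceq\)-maximal in \(K\), so \((K^-,m)\) is one of these edges. Condition 1 forces
\[
g(K^-,m)=\alpha(K).
\]
Now take any other maximal element \(b\neq m\) of \(K\), and put \(L:=K\setminus\{b,m\}\). Then \(b,m\in\mathcal A(L)\), \(b\parallel m\) and \(b<_\tau m\), so \((L;b,m)\) is a diamond with top \(K\). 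Requiring its curvature to equal \(\kappa\) and using \(L\cup\{b\}=K^-\), the definition \eqref{eq:diamond-curvature} gives
\[
g(K\setminus\{b\},b)=\kappa(L;b,m)+g(L,b)+\alpha(K)-g(L,m).
\]
The terms \(g(L,b)\) and \(g(L,m)\) are edges into ideals of size \(|K|-1\), so they are already determined. Hence every edge into \(K\) is forced, which proves uniqueness.

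For existence, define \(g\) by these formulas. Condition 1 holds by construction. The diamonds are exactly the pairs (top ideal \(K\), two distinct \(\preceq\)-maximal elements of \(K\)), so it suffices to check, inductively in \(|K|\), that every diamond with top \(K\) has curvature \(\kappa\). Diamonds with top \(K\) that involve \(m(K)\) satisfy this by construction. The remaining diamonds are \((K\setminus\{b,c\};b,c)\) with \(b<_\tau c<_\tau m\), all three elements maximal in \(K\).

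This remaining case is the main obstacle, and the step where cube consistency is used. Set \(N:=K\setminus\{b,c,m\}\). Because \(b,c,m\) are pairwise incomparable and maximal in \(K\), they all lie in \(\mathcal A(N)\), so \((N;b,c,m)\) is a three-cube whose top is \(K\). Its six faces split as follows.
\begin{itemize}
\item The three bottom faces \((N;b,c)\), \((N;b,m)\), \((N;c,m)\) have tops of size \(|K|-1\), so by the induction hypothesis their curvature is \(\kappa\).
\item Of the three top faces, \((N\cup\{c\};b,m)\) and \((N\cup\{b\};c,m)\) involve \(m\), so their curvature is \(\kappa\) by construction.
\item The last top face is \((N\cup\{m\};b,c)=(K\setminus\{b,c\};b,c)\), the diamond we need.
\end{itemize}
Proposition~\ref{prop:bianchi-main}, applied to the edge field \(g\) restricted to this cube (all twelve edges are now defined), says that the curvature of \(g\) satisfies \eqref{eq:cube-consistency}. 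By hypothesis \(\kappa\) satisfies the same identity. Subtracting the two identities, five of the six terms cancel because \(\kappa_g\) and \(\kappa\) agree on those faces. The remaining term gives \(\kappa_g(N\cup\{m\};b,c)=\kappa(N\cup\{m\};b,c)\).

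Some care is needed with signs and labels. The face \((N\cup\{m\};b,c)\) must appear in \eqref{eq:cube-consistency} with coefficient \(\pm1\), under the \(u<_\tau v<_\tau w\) labelling with \((u,v,w)=(b,c,m)\). This can be read off directly from \eqref{eq:cube-consistency}, where it is the term \(-\kappa(I\cup\{w\};u,v)\). If one prefers not to rely on Proposition~\ref{prop:bianchi-main} for a partially constructed field, the same conclusion follows by expanding all six curvatures into the twelve edge values and checking that they telescope to zero.
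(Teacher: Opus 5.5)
Your proof is correct, and its core matches the paper's. The non-tree edge into $K$ along $b$ is pinned by the same diamond $(K\setminus\{b,m(K)\};b,m(K))$ that drives the recursion \eqref{eq:zero-gauge-recursion}; there $b=m(I)=m(I\cup\{a\})$. The remaining diamonds are checked on the same three-cube, with $m(K)$ as the third direction.

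The packaging differs. The paper proceeds in three steps:
first it builds the zero-gauge field $g^{(0)}$ by recursion on $\delta(I,a)$;
then it verifies its curvature by induction on $\Delta$, unwinding the recursion on four edges;
finally it reaches general $\alpha$ by adding $d\psi$, with $\psi$ the tree integral of $\alpha$ (Lemmas~\ref{lem:tree-potential-app} and~\ref{lem:gradient-app}).
You put $\alpha$ into the recursion directly and verify by Bianchi-plus-subtraction.

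The substantive difference is uniqueness. The paper shows the difference of two solutions has zero curvature, invokes Theorem~\ref{thm:path-independence-main} (and hence the diamond representation theorem) to write it as a gradient, and kills the potential along the tree. Your observation that every edge value is forced by conditions 1--2 gives uniqueness with no appeal to path independence. In exchange, the paper's route exhibits every realization of $\kappa$ explicitly as $g^{(0)}+d\psi$, so the gauge freedom is seen to be exactly the gradients.

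One caveat, which affects the paper as much as your argument: \eqref{eq:cube-consistency} as printed is not the identity that curvature fields satisfy. Let $g$ be $1$ on the single edge $(I,u)$ and $0$ elsewhere. Then $\kappa_g(I;u,v)=\kappa_g(I;u,w)=-1$ and the other four faces vanish, so the printed left-hand side equals $-2$. The correct Bianchi identity flips the middle pair to $-\kappa(I;u,w)+\kappa(I\cup\{v\};u,w)$. That corrected form is what the paper's own inductive step actually uses. With the printed signs the theorem already fails on a three-element antichain: take $\kappa(\varnothing;u,v)=-1$, $\kappa(\varnothing;u,w)=1$, and all else $0$.

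Consequently, your fallback claim that the twelve edge values ``telescope to zero'' is false for the printed signs, and Proposition~\ref{prop:bianchi-main} as printed is not a valid citation. Your subtraction argument survives unchanged once the definition is corrected. It only needs $\kappa$ and $\kappa_g$ to satisfy the same linear identity, and the face $(I\cup\{w\};u,v)$ still has coefficient $-1$.
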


\begin{center}
\fbox{
\begin{minipage}{0.93\linewidth}
\textbf{Main Result 2.}
A local diamond field is globally realizable if and only if it satisfies the cube constraints; once the reference-tree edge values are fixed, the realizing edge field is unique.
\end{minipage}
}
\end{center}

\section{Causal Identification and Support Separation}
\label{sec:causal}

The causal setup follows the standard longitudinal intervention and dynamic treatment regime framework \cite{robins1986,robins1987,murphy2003,murphy2005,hernanrobins2020,qian2011}.

Fix a decision time \(t\). Let $Z_t=(I_t,X_t)$
be the reduced state. Let \(R^\gamma\) denote the potential reward under admissible path \(\gamma\).

\begin{definition}[Initial-state conditional value]
For \(z=(I,x)\), define
\begin{equation}
Q_z(\gamma):=\EE[R^\gamma\mid Z_t=z]
\label{eq:initial-state-conditional-value}
\end{equation}
for every admissible path \(\gamma\) starting from \(I\).
\end{definition}

\paragraph{Assumptions.}
We impose the following conditions.

\begin{enumerate}
    \item[\textbf{C1.}] \textbf{Consistency and no interference.}
    If the observed continuation equals \(\gamma\), then the observed reward equals \(R^\gamma\), and one unit's intervention does not affect another's reward.

    \item[\textbf{C2.}] \textbf{Reduced-state sequential ignorability.}
    At each stage \(b\),
    \begin{equation}
    A_b \perp\!\!\!\perp \{R^{\gamma'}:\gamma' \text{ admissible from } Z_b\}\mid Z_b.
    \label{eq:reduced-state-ignorability}
    \end{equation}

    \item[\textbf{C3.}] \textbf{Positivity.}
    Every admissible action or local order whose effect is to be identified occurs with positive conditional probability on the support of the corresponding intervention-induced state law.

    \item[\textbf{C4.}] \textbf{Statewise additive structural mean.}
    For every initial state \(z\), there exists an edge field \(g_z\) on the truncated ideal lattice reachable from the initial ideal such that
    \begin{equation}
    \textstyle Q_z(\gamma)=\sum_{e\in\gamma} g_z(e)
    \label{eq:statewise-additive-mean}
    \end{equation}
    for every admissible path \(\gamma\) from that initial ideal.
\end{enumerate}

\begin{definition}[Intervention-induced state law]
Let
$
\gamma=(a_0,\dots,a_{L-1})
$
be an admissible path from \(z_0\). Define recursively the path-induced state laws
\begin{equation}
\textstyle p_0^\gamma(\cdot\mid z_0):=\delta_{z_0},
\qquad
p_{b+1}^\gamma(z_{b+1}\mid z_0)
:=
\int p(z_{b+1}\mid z_b,a_b)\,p_b^\gamma(z_b\mid z_0)\,dz_b.
\label{eq:path-induced-state-law}
\end{equation}
\end{definition}

\begin{definition}[Supported paths]
An admissible path
$
\gamma=(a_0,\dots,a_{L-1})
$
starting from \(z_0\) is {supported at \(z_0\)} if, for every stage \(b=0,\dots,L-1\),
\begin{equation}
\Pr(A_b=a_b\mid Z_b=z_b)>0
\quad\text{for }p_b^\gamma(\cdot\mid z_0)\text{-a.e. }z_b.
\label{eq:path-support}
\end{equation}
\end{definition}

\begin{definition}[Two-sided supported diamonds]
A diamond \(d=(I;u,v)\), \(u<_{\tau}v\), is {two-sided supported at \(z=(I,x)\)} if both two-step paths
\[
I\to I\cup\{u\}\to I\cup\{u,v\},
\qquad
I\to I\cup\{v\}\to I\cup\{u,v\},
\]
are supported at \(z\).
\end{definition}

\begin{definition}[Local order effect at state \(z\)]
For a diamond \(d=(I;u,v)\), \(u<_{\tau}v\), define
\begin{equation}
\kappa_z(d)
:=
Q_z\!\left(I\to I\cup\{v\}\to I\cup\{u,v\}\right)
-
Q_z\!\left(I\to I\cup\{u\}\to I\cup\{u,v\}\right).
\label{eq:local-order-effect}
\end{equation}
\end{definition}

\begin{proposition}[Non-identifiability of unsupported local order effects]
\label{prop:nonid-main}
Fix \(I\in J(P)\), \(u<_{\tau}v\), and a measurable set \(B\subseteq \mathcal X\) with positive probability. Suppose that, for every \(x\in B\),
\[
\Pr\!\left(
I\to I\cup\{v\}\to I\cup\{u,v\}
\ \middle|\ 
Z_t=(I,x)
\right)=0.
\]
Then the local order effect \(\kappa_{(I,x)}(I;u,v)\) is not point-identified on \(B\) from the observational law over the class of models satisfying \textbf{C1}--\textbf{C3}.
\end{proposition}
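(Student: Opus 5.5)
The plan is the standard two-model non-identifiability argument. We construct two data-generating processes, \(\mathcal M_1\) and \(\mathcal M_2\), that both satisfy \textbf{C1}--\textbf{C3} and induce the same joint law of observables \((Z_b,A_b,R)\). Their local order effects \(\kappa_{(I,x)}(I;u,v)\) will differ for every \(x\in B\). Since \(B\) has positive probability, no functional of the observational law can equal \(\kappa_{(I,x)}(I;u,v)\) on \(B\) for both models, so point identification fails.

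\textbf{Step 1: a base model.} Start from any model \(\mathcal M_1\) satisfying \textbf{C1}--\textbf{C3} and the hypothesis. The hypothesis says the continuation \(v\) then \(u\) from \(Z_t=(I,x)\) has zero observational probability for \(x\in B\). Concretely, either \(\Pr(A_t=v\mid Z_t=(I,x))=0\), or the second action \(u\) has zero probability on the induced state law after \(v\). Only potential outcomes that are realized with positive probability enter the observed law, through consistency \textbf{C1}. Hence the potential reward \(R^{\gamma_{vu}}\), with \(\gamma_{vu}:=(I\to I\cup\{v\}\to I\cup\{u,v\})\), is never revealed on \(\{Z_t\in\{I\}\times B\}\).

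\textbf{Step 2: the perturbation.} Define \(\mathcal M_2\) to agree with \(\mathcal M_1\) in the state transitions, the treatment mechanism, and all potential rewards except one. On \(\{Z_t\in\{I\}\times B\}\), set \(R^{\gamma_{vu}}_{(2)}:=R^{\gamma_{vu}}_{(1)}+c\) for a constant \(c\neq0\); more precisely, apply the shift to every potential reward of a continuation beginning with \(\gamma_{vu}\). We then check the assumptions for \(\mathcal M_2\).
\begin{enumerate}[label=(\roman*)]
\item \textbf{C1} holds because the modified outcomes are never the observed continuation on this event.
\item \textbf{C2} holds because the shift is a deterministic function of \(Z_t\), so conditional independence of \(A_b\) from the potential-outcome vector given \(Z_b\) is preserved. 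For later stages \(b\), the shift depends on the past only through \(Z_b\) and the path, which is still allowed.
\item \textbf{C3} is unaffected, since the treatment law is unchanged. The positivity requirement is only for effects to be identified, and this diamond is exactly the unsupported one.
\end{enumerate}
The observational laws coincide because the joint law of \((Z,A)\) is identical, and observed rewards agree wherever the observed path is not \(\gamma_{vu}\). That event is null on \(\{I\}\times B\).

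\textbf{Step 3: the effects differ.} By definition \eqref{eq:local-order-effect}, \(Q^{(2)}_{(I,x)}(\gamma_{vu})=Q^{(1)}_{(I,x)}(\gamma_{vu})+c\). The other boundary path's value is unchanged, so \(\kappa^{(2)}=\kappa^{(1)}+c\) on \(B\).

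\textbf{Expected obstacle.} The delicate step is the case where the zero probability arises at the second stage, \(u\) after \(v\), only almost everywhere on the induced state law. The shift must then be applied on the event of the realized state \(Z_{t+1}\) after \(v\), in a way that stays measurable and keeps \textbf{C2} at stage \(t+1\). I would handle this by making the shift a function of \((Z_t,Z_{t+1})\) restricted to the null set where \(u\) is never chosen.

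Because the statement only requires \textbf{C1}--\textbf{C3}, \(\mathcal M_2\) need not satisfy \textbf{C4}. Even so, a constant shift on a full path from \(I\) is compatible with additivity: add \(c\) to the edge \(g(I\cup\{v\},u)\) at these states. So the construction would also survive within the \textbf{C4} class, which I would remark on.
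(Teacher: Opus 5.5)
Your proposal is correct and is essentially the paper's argument. The paper also fixes a model satisfying \textbf{C1}--\textbf{C3} and shifts the counterfactual value of the unsupported order $I\to I\cup\{v\}\to I\cup\{u,v\}$ on $B$ by a bounded, not-a.e.-zero $\Delta(x)$ (your constant $c$), keeping the observational action and transition laws fixed; the observational law is then unchanged while $\kappa_{(I,x)}(I;u,v)$ moves by $\Delta(x)$.

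The paper perturbs only the mean system $Q$. It never constructs potential outcomes, re-verifies \textbf{C1}--\textbf{C3}, or treats the second-stage case you flag, so your construction is the more explicit version. Your stage-$(t+1)$ step needs the behaviour policy to be Markov in the reduced state, which the paper's support definitions tacitly assume. Your \textbf{C4} remark is a small genuine strengthening.
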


\begin{theorem}[Identification of supported path values]
\label{thm:gformula-main}
Under \textbf{C1}--\textbf{C3}, let
$
\gamma=(a_0,\dots,a_{L-1})
$
be supported at \(z_0=(I_0,x_0)\). Then
\begin{align}
\textstyle Q_{z_0}(\gamma)
=
&\textstyle \int
\EE\!\left[
R
\mid
Z_L=z_L,\ A_0=a_0,\dots,A_{L-1}=a_{L-1},\ Z_0=z_0
\right] \times\\
&\textstyle \times
\prod_{b=0}^{L-1} p(z_{b+1}\mid z_b,a_b)\,dz_{1:L}.
\label{eq:gformula-supported-path}
\end{align}
\end{theorem}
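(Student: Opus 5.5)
The argument is the standard $g$-formula argument, carried out stage by stage with backward induction along the path $\gamma=(a_0,\dots,a_{L-1})$. For $b=L,L-1,\dots,0$ define the iterated regression functions
\[
m_L(z_L):=\EE\!\left[R\mid Z_L=z_L,\ A_{0:L-1}=a_{0:L-1},\ Z_0=z_0\right],
\qquad
m_b(z_b):=\int m_{b+1}(z_{b+1})\,p(z_{b+1}\mid z_b,a_b)\,dz_{b+1}.
\]
The claim then reads $Q_{z_0}(\gamma)=m_0(z_0)$. This follows from the inductive statement
\[
m_b(z_b)=\EE\!\left[R^{\gamma_{\ge b}}\mid Z_b=z_b\right]
\quad\text{for }p_b^\gamma(\cdot\mid z_0)\text{-a.e. }z_b,
\]
where $\gamma_{\ge b}=(a_b,\dots,a_{L-1})$ is the continuation of $\gamma$ from stage $b$. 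By the Markov-type structure of the reduced state, this continuation is admissible from $Z_b$, since $Z_b$ carries the current ideal $I_b$.

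\emph{Base case $b=L$.} The empty continuation gives potential reward $R^{\gamma}$. Supportedness makes the conditioning event $\{A_{0:L-1}=a_{0:L-1}\}$ have positive probability on $p_L^\gamma$-a.e.\ $z_L$, so $m_L$ is well defined there. By \textbf{C1} (consistency), $R=R^\gamma$ on that event, which gives $m_L(z_L)=\EE[R^\gamma\mid Z_L=z_L,\ A_{0:L-1}=a_{0:L-1},Z_0=z_0]$.

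\emph{Inductive step $b\to b$ from $b+1$.} Apply \textbf{C2} at stage $b$ to replace $\EE[R^{\gamma_{\ge b}}\mid Z_b=z_b]$ by $\EE[R^{\gamma_{\ge b}}\mid Z_b=z_b,A_b=a_b]$. This is legitimate because \textbf{C3}, via the supported-path condition \eqref{eq:path-support}, gives $\Pr(A_b=a_b\mid Z_b=z_b)>0$ for $p_b^\gamma$-a.e.\ $z_b$. Next iterate expectations over $Z_{b+1}$ using the transition kernel $p(z_{b+1}\mid z_b,a_b)$. On $\{A_b=a_b\}$, consistency identifies $R^{\gamma_{\ge b}}$ with $R^{\gamma_{\ge b+1}}$ evaluated from the realized $Z_{b+1}$. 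The induction hypothesis then applies, because $p_{b+1}^\gamma$ is exactly the pushforward of $p_b^\gamma$ under this kernel (definition \eqref{eq:path-induced-state-law}), so the almost-everywhere sets match.

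Unrolling $m_0$ gives the displayed product-integral formula \eqref{eq:gformula-supported-path}.

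The main obstacle is measure-theoretic bookkeeping. All conditional expectations must be defined only on the $p_b^\gamma$-support, and the ignorability in \textbf{C2} must be shown to condition on the full history implicitly through $Z_b$. In particular, the conditioning on $A_{0:b-1}$ and $Z_0$ must be dropped when passing between stages, which uses the reduced-state sufficiency built into \textbf{C2} together with the transition law $p(z_{b+1}\mid z_b,a_b)$ not depending on earlier history. I would state this Markov reduction explicitly as a lemma first, then run the induction.
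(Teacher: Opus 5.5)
Your proposal follows the paper's proof: backward induction along the suffixes $\gamma_{b:L-1}$, using \textbf{C2} with positivity to condition on $A_b=a_b$, consistency to pass from $R^{\gamma_{b:L-1}}$ to $R^{\gamma_{b+1:L-1}}$, and iterated expectation against $p(z_{b+1}\mid z_b,a_b)$; your fixed-integrand recursion $m_b$ just keeps the same steps in tidier form. The history-dropping lemma you isolate is the step the paper attributes to ``reduced-state sequential ignorability'' without further comment. Since \textbf{C2} as written only constrains the law of $A_b$, you cannot derive that lemma (the conditional mean of the potential rewards given $Z_b$ not depending on $Z_{0:b-1},A_{0:b-1}$) from it; it has to be read into the assumption, as the paper implicitly does.
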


\begin{theorem}[Support separation]
\label{thm:support-separation-main}
Assume \textbf{C1}--\textbf{C3}. Let \(z=(I,x)\).
\begin{enumerate}
    \item If \(\rho_{I,J}\) is supported at \(z\), then the reference-path score
    \begin{equation}
    \textstyle \Phi_z^\rho(I,J):=Q_z(\rho_{I,J})
    \label{eq:identified-reference-score}
    \end{equation}
    is identified.

    \item If \(d=(I;u,v)\) is two-sided supported at \(z\), then \(\kappa_z(d)\) is identified.
\end{enumerate}
If, in addition, \textbf{C4} holds, then the following stronger statement is valid:
\begin{enumerate}[resume]
    \item If \(\rho_{I,J}\) is supported at \(z\) and every diamond appearing in a rewrite from \(\rho_{I,J}\) to \(\gamma\in\Gamma(I,J)\) is two-sided supported at \(z\), then
    \begin{equation}
    \textstyle Q_z(\gamma)
    =
    \Phi_z^\rho(I,J)
    +
    \sum_{m=1}^M \varepsilon_m\,\kappa_z(d_m)
    \label{eq:identified-full-decomposition}
    \end{equation}
    is identified.
\end{enumerate}
\end{theorem}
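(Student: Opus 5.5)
The plan is to obtain parts 1 and 2 directly from Theorem~\ref{thm:gformula-main}, and part 3 from the statewise additive structure \textbf{C4} combined with the diamond rewriting of Theorem~\ref{thm:diamond-representation-main} and the decomposition identity of Theorem~\ref{thm:reference-decomposition-main}.

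For part 1, \(\rho_{I,J}\) is an admissible path from \(I\) that is supported at \(z\). Theorem~\ref{thm:gformula-main} therefore expresses \(Q_z(\rho_{I,J})\) as the \(g\)-formula integral. That integral depends only on the observational conditional reward mean and the observational transition kernels \(p(z_{b+1}\mid z_b,a_b)\). Support guarantees that these quantities are well defined, i.e.\ they are evaluated on sets of positive conditional probability, \(p_b^{\rho}\)-a.e. Hence \(\Phi_z^\rho(I,J)\) is a functional of the observational law. For part 2, two-sided support means that each of the two boundary paths of \(d\) is supported at \(z\). Applying Theorem~\ref{thm:gformula-main} to each path identifies both terms in \eqref{eq:local-order-effect}, so their difference \(\kappa_z(d)\) is identified. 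One point to state carefully: the diamond's base ideal must match the current ideal of \(z\). If a diamond sits at a deeper ideal \(I'\supsetneq I\), the relevant quantity is a \(Q_z\)-difference of paths from \(I\) that pass through that diamond, and I would read ``two-sided supported at \(z\)'' in that path-extended sense.

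For part 3, I assume \textbf{C4}, so \(Q_z(\gamma)=V_{g_z}(\gamma)\) for the edge field \(g_z\). Theorem~\ref{thm:reference-decomposition-main}, applied with \(g_z\) in place of \(g_x\), gives
\[
Q_z(\gamma)=V_{g_z}(\rho_{I,J})+\sum_{m}\varepsilon_m\,\kappa_{g_z}(d_m).
\]
The next step is to check that the curvature of \(g_z\) at a diamond \(d_m=(I_m;u_m,v_m)\) coincides with the local order effect \(\kappa_z(d_m)\) of \eqref{eq:local-order-effect}. Under \textbf{C4}, both boundary paths can be prefixed by the same reference prefix from \(I\) to \(I_m\) and suffixed by the same continuation to \(J\). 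The common edges cancel in the \(Q_z\)-difference, leaving exactly \(\kappa_{g_z}(d_m)\). This is the precise sense in which each diamond in the rewrite is a \(Q_z\)-difference of two full paths that differ only on that diamond. By part 1, the first term equals the identified \(\Phi_z^\rho(I,J)\). By the previous paragraph, each summand is a \(Q_z\)-difference of paths supported by hypothesis, so each is identified. Therefore the whole right side of \eqref{eq:identified-full-decomposition} is identified, and by the displayed identity it equals \(Q_z(\gamma)\).

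I expect the main obstacle to be exactly this step: showing that the diamonds of the rewrite, which sit at intermediate ideals \(I_m\), have identified \(Q_z\)-level curvatures. I would handle it as follows. Write \(\gamma_{m-1}\) and \(\gamma_m\) for the consecutive paths in the swap sequence from \(\rho_{I,J}\) to \(\gamma\). Then \(\kappa_{g_z}(d_m)=\pm\bigl(Q_z(\gamma_m)-Q_z(\gamma_{m-1})\bigr)\) by additivity. I interpret the two-sided support hypothesis as requiring \(\gamma_{m-1}\) and \(\gamma_m\) to be supported at \(z\), so both values are identified by Theorem~\ref{thm:gformula-main}. This also makes the result independent of the chosen rewrite, since the telescoping sum simply recovers \(Q_z(\gamma)\).
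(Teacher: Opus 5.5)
Your proposal is correct and follows the paper's argument. Parts (1) and (2) come from applying Theorem~\ref{thm:gformula-main} to the supported reference path and to both supported sides of the diamond, and part (3) comes from applying Theorem~\ref{thm:reference-decomposition-main} to the \textbf{C4} edge field \(g_z\) and substituting identified terms. Your extra step for diamonds at intermediate ideals \(I_m\supsetneq I\) is more careful than the paper's one-line substitution, since part (2) as stated only covers diamonds based at the ideal of \(z\): you identify \(\kappa_{g_z}(d_m)\) as a \(Q_z\)-difference of two supported full paths from \(I\) that differ only on \(d_m\). The reading in your third paragraph (some supported pair of full paths through each \(d_m\)) already suffices. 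The stronger reading in your last paragraph forces \(\gamma\) itself to be supported, which makes the decomposition unnecessary for identification.
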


\begin{center}
\fbox{
\begin{minipage}{0.93\linewidth}
\textbf{Main Result 3.}
Reference-path scores and local order effects have different support requirements: supported reference paths identify reference-path terms, whereas two-sided diamond support is necessary and sufficient for local order effects.
\end{minipage}
}
\end{center}

\begin{corollary}[Identification of the optimal policy on a supported class]
\label{cor:policy-main}
Fix \(H<\infty\). For each initial ideal \(I\), the set
$
\textstyle \Gamma_H(I):=\bigcup_{J\supseteq I} \Gamma_H(I,J)
$
is finite. Assume \textbf{C1}--\textbf{C4}. If, for every candidate \(\gamma\in \Gamma_H(I)\), the corresponding reference path \(\rho_{I,J_\gamma}\) is supported at the current state \(z=(I,x)\), and every diamond appearing in a rewrite from \(\rho_{I,J_\gamma}\) to \(\gamma\) is two-sided supported at \(z\), then
\begin{equation}
\textstyle \pi^\star(z)\in \arg\max_{\gamma\in \Gamma_H(I)} Q_z(\gamma)
\label{eq:identified-optimal-policy}
\end{equation}
is point-identified, after deterministic tie-breaking.
\end{corollary}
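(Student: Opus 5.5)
The plan is to reduce the corollary to Theorem~\ref{thm:support-separation-main}(3), applied candidate by candidate, followed by a finite argmax.

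\textbf{Finiteness.} Since \(P\) is finite, \(J(P)\) is finite. Every path in \(\Gamma_H(I)\) is a sequence of at most \(H+1\) ideals starting at \(I\), so \(|\Gamma_H(I)|\le\sum_{L\le H}|J(P)|^{L}<\infty\). Alternatively, each step adds a distinct element of \(S\), so the length is at most \(|S\setminus I|\) anyway. In either case the union over \(J\supseteq I\) is a finite union of finite sets.

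\textbf{Identification of each candidate value.} Fix \(\gamma\in\Gamma_H(I)\) with endpoint \(J_\gamma\). By Theorem~\ref{thm:diamond-representation-main} there is a diamond-swap sequence \((d_m,\varepsilon_m)_{m=1}^M\) taking \(\rho_{I,J_\gamma}\) to \(\gamma\); the hypothesis is that every such \(d_m\) is two-sided supported at \(z\). Under \textbf{C1}--\textbf{C4}, part 3 of Theorem~\ref{thm:support-separation-main} gives that
\[
Q_z(\gamma)=\Phi_z^\rho(I,J_\gamma)+\sum_{m}\varepsilon_m\kappa_z(d_m)
\]
is identified. More precisely, each summand is a functional of the observational law: the first via Theorem~\ref{thm:gformula-main} applied to the supported reference path, and the \(\kappa_z(d_m)\) via the same g-formula applied to both supported two-step sides. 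Their signed sum is therefore a functional of the observational law. As a result, any two models satisfying \textbf{C1}--\textbf{C4} with the same observational law agree on \(Q_z(\gamma)\) for every \(\gamma\in\Gamma_H(I)\).

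\textbf{Argmax and tie-breaking.} The map \(\gamma\mapsto Q_z(\gamma)\) on the finite set \(\Gamma_H(I)\) is identified, so the set of maximizers is a nonempty, identified subset of \(\Gamma_H(I)\). Fix a deterministic total order on \(\Gamma_H(I)\) that does not depend on the data distribution, for example lexicographic in the action sequence with respect to \(<_\tau\) and then by length. Selecting the first maximizer under this order yields a single path \(\pi^\star(z)\). Because it is a fixed function of the identified vector \((Q_z(\gamma))_\gamma\), it is point-identified.

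The main subtlety is the quantifier in the hypothesis. If several rewrites from \(\rho_{I,J_\gamma}\) to \(\gamma\) exist, the decomposition formula gives the same value for each, since by Theorem~\ref{thm:reference-decomposition-main} it always equals \(Q_z(\gamma)\) under \textbf{C4}. So it suffices that one rewrite has all diamonds two-sided supported, and the hypothesis as stated supplies this. Apart from this point, the argument is bookkeeping.
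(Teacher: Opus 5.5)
Your proposal is correct and matches the paper's proof: finiteness of \(\Gamma_H(I)\), identification of each candidate value \(Q_z(\gamma)\) through part~3 of Theorem~\ref{thm:support-separation-main}, and a finite argmax made single-valued by a fixed deterministic tie-break. Your extra remarks are sound refinements of steps the paper leaves implicit: the explicit cardinality bound, and the observation that one fully supported rewrite suffices because under \textbf{C4} every rewrite yields the same value.
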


\section{Planning Consequences}
\label{sec:planning}

\begin{theorem}[Order-insensitivity bound]
\label{thm:order-bound-main}
Assume \textbf{C4}. Fix \(z=(I,x)\) and \(H\in\mathbb N\). Suppose that
$
|\kappa_z(d)|\le \varepsilon
$
for every diamond reachable from the initial ideal within depth \(H\). Then for any \(J\supseteq I\), any \(L\le H\), and any \(\gamma,\gamma'\in \Gamma_L(I,J)\),
\begin{equation}
\textstyle |Q_z(\gamma)-Q_z(\gamma')|
\le
N_{\mathrm{swap}}(\gamma,\gamma')\,\varepsilon
\le
\binom{L}{2}\varepsilon
\le
\binom{H}{2}\varepsilon,
\label{eq:order-insensitivity-bound}
\end{equation}
where \(N_{\mathrm{swap}}(\gamma,\gamma')\) is the minimum number of diamond swaps needed to transform \(\gamma\) into \(\gamma'\).
\end{theorem}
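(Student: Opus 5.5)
The plan is to combine the diamond representation (Theorem~\ref{thm:diamond-representation-main}) with the edge-additive structure supplied by \textbf{C4}. By \textbf{C4}, \(Q_z(\gamma)=\sum_{e\in\gamma}g_z(e)\) for every admissible path from \(I\); this is exactly the valuation \eqref{eq:path-valuation} with \(g_x\) replaced by \(g_z\). Hence the local order effect \eqref{eq:local-order-effect} coincides with the diamond curvature \eqref{eq:diamond-curvature} of \(g_z\). The argument of Theorem~\ref{thm:reference-decomposition-main} then applies verbatim to \(Q_z\), with any path in \(\Gamma(I,J)\) taking the place of the reference path.

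Fix \(\gamma,\gamma'\in\Gamma_L(I,J)\). Every path in \(\Gamma(I,J)\) has length \(|J\setminus I|\), so both paths have length \(m:=|J\setminus I|\le L\le H\). Let \(d_1,\dots,d_M\) with signs \(\varepsilon_m\) be a minimal swap sequence from \(\gamma'\) to \(\gamma\), so \(M=N_{\mathrm{swap}}(\gamma,\gamma')\). Its existence is guaranteed by Theorem~\ref{thm:diamond-representation-main}. Each swap replaces one two-step side of a diamond \(d_m=(K;u,v)\) by the other side and leaves all other edges unchanged. Therefore the value changes by exactly \(\pm\kappa_z(d_m)\), and telescoping gives
\[
Q_z(\gamma)-Q_z(\gamma')=\sum_{m=1}^M\varepsilon_m\kappa_z(d_m).
\]

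Next I check that each \(d_m\) is reachable within depth \(H\), so that the hypothesis \(|\kappa_z(d_m)|\le\varepsilon\) applies. Every intermediate path in the swap sequence lies in \(\Gamma(I,J)\). The base ideal \(K\) of \(d_m\) sits on such a path, so \(I\subseteq K\) and \(K\cup\{u,v\}\subseteq J\). In particular \(|K\setminus I|+2\le m\le H\), so the whole diamond lies inside the depth-\(H\) truncation. The triangle inequality then gives the first inequality \(|Q_z(\gamma)-Q_z(\gamma')|\le N_{\mathrm{swap}}\varepsilon\).

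The main step is the bound \(N_{\mathrm{swap}}(\gamma,\gamma')\le\binom{L}{2}\). Identify \(\gamma\) and \(\gamma'\) with linear extensions \(\sigma,\sigma'\) of the induced subposet on \(J\setminus I\); a diamond swap is then an adjacent transposition of two consecutive incomparable elements. Let \(\mathrm{inv}(\sigma,\sigma')\) be the number of pairs ordered differently in \(\sigma\) and \(\sigma'\). Such pairs are incomparable, since both are linear extensions. I argue by induction on \(\mathrm{inv}\). If \(\mathrm{inv}>0\), some adjacent pair \((a,b)\) of \(\sigma\) is inverted relative to \(\sigma'\); otherwise \(\sigma\) would equal \(\sigma'\). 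Since \(a\) and \(b\) are incomparable, swapping them is a diamond swap: the base ideal is the prefix before them, and both \(a,b\) are admissible there because the prefix is an ideal containing all their predecessors. This swap yields another linear extension with one fewer inversion. Hence \(N_{\mathrm{swap}}\le\mathrm{inv}\le\binom{m}{2}\le\binom{L}{2}\le\binom{H}{2}\), using monotonicity of \(\binom{\cdot}{2}\). The only delicate point is verifying that the adjacent transposition is genuinely a diamond of \(J(P)\); the prefix-ideal observation settles this.
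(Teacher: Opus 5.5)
Your proof is correct and takes the same route as the paper: under \textbf{C4}, edge-additivity combined with the diamond representation theorem gives the signed curvature sum along a minimal swap sequence; the triangle inequality then yields the first bound, and an inversion-count argument yields $N_{\mathrm{swap}}\le\binom{L}{2}\le\binom{H}{2}$. You are also more careful than the paper's Lemma~\ref{lem:inversion-app} on two points it leaves implicit: that every diamond in the swap sequence lies within depth $H$ of $I$, and that an inverted adjacent (hence incomparable) pair always exists, so that each admissible swap strictly reduces the inversion count.
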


\begin{theorem}[Dynamic programming on the truncated ideal lattice]
\label{thm:dp-main}
Assume \textbf{C4}. Fix \(z=(I,x)\) and \(H<\infty\). Then
\begin{equation}
\textstyle \arg\max_{\gamma\in \Gamma_H(I)} Q_z(\gamma)
\label{eq:dp-target}
\end{equation}
is nonempty and computable by dynamic programming on the depth-\(H\) truncation of the upward-oriented Hasse diagram of \(J(P)\).
\end{theorem}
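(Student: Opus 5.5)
The plan is a standard finite-horizon Bellman recursion on the layered graph whose nodes are pairs $(K,\ell)$. Here $K\in J(P)$ is reachable from $I$ and $\ell\le H$ is the number of steps taken. Under \textbf{C4}, every $\gamma\in\Gamma_H(I)$ satisfies $Q_z(\gamma)=\sum_{e\in\gamma}g_z(e)$, so the objective is edge-additive along the upward-oriented Hasse diagram of $J(P)$.

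\textbf{Nonemptiness.} Admissible paths from $I$ are exactly directed paths in the Hasse diagram starting at $I$. Each step adds one element of the finite set $S\setminus I$, so a path has length at most $|S\setminus I|$. The set $\Gamma_H(I)$ is therefore finite, which is the first claim of Corollary~\ref{cor:policy-main}. It is also nonempty, since it contains the trivial path $(I)$ of length $0$ and value $0$. A finite nonempty set of real values attains its maximum, so the $\arg\max$ in \eqref{eq:dp-target} is nonempty.

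\textbf{Dynamic programming.} Define, for $K\supseteq I$ reachable and $0\le r\le H$, the value-to-go
\[
W_r(K):=\max\Bigl\{\textstyle\sum_{e\in\eta}g_z(e):\ \eta \text{ admissible from } K,\ \text{length}\le r\Bigr\}.
\]
Set $W_0(K)=0$. The recursion is
\[
W_r(K)=\max\Bigl(0,\ \max_{a\in\mathcal A(K)}\bigl[g_z(K,a)+W_{r-1}(K\cup\{a\})\bigr]\Bigr),
\]
where the empty maximum is $-\infty$. The term $0$ accounts for stopping, since $\Gamma_H(I)$ allows paths ending at any $J$. The proof is by induction on $r$. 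Any nonempty path $\eta$ of length at most $r$ decomposes uniquely as a first edge $(K,a)$ with $a\in\mathcal A(K)$, followed by a path from $K\cup\{a\}$ of length at most $r-1$. Edge-additivity then splits the value as $g_z(K,a)$ plus the value of the tail, which is the principle of optimality.

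Then $\max_{\gamma\in\Gamma_H(I)}Q_z(\gamma)=W_H(I)$. A maximizer is recovered by forward greedy selection of any argmax choice at each step, stopping when the $0$ option is maximal, with a fixed tie-breaking rule. The set of maximizers is exactly the set of paths that follow argmax choices throughout. This is because any maximizer restricted to a suffix must be optimal for that suffix, or else swapping in a better suffix would increase the total.

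\textbf{Computability.} The recursion touches only ideals reachable from $I$ within $H$ steps, which is the depth-$H$ truncation. It uses finitely many table entries and finitely many arithmetic operations, given oracle access to $g_z$ on that truncation.

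The main subtlety is bookkeeping rather than depth, and it has two parts. First, the state must include the remaining budget $r$ as well as $K$: different paths reach $K$ at the same depth $|K\setminus I|$, so depth is actually determined by $K$, and $r=H-|K\setminus I|$ could be used instead. I would note this simplification so the DP runs directly on the truncated Hasse diagram nodes. Second, the stopping option must be handled correctly, since $J$ is free.

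Although $Q_z$ depends on $z$ only through the fixed initial state, \textbf{C4} is used only to guarantee a single edge field $g_z$ valid for all continuations. No identification assumptions are needed, since the theorem concerns the structural argmax.
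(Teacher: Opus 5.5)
Your proposal is correct and follows essentially the same route as the paper. Both run a backward Bellman recursion with an explicit stopping option $\max\{0,\cdot\}$ over the finite acyclic depth-$H$ truncation, and your $W_r$ is exactly the paper's $U_{H-r}$; you also make explicit two points the paper leaves implicit, namely that the trivial path makes $\Gamma_H(I)$ nonempty and that the remaining budget is determined by $K$ via $r=H-|K\setminus I|$.
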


\section{Experiments}
\label{sec:experiments}

\begingroup
\setlength{\textfloatsep}{6pt plus 1pt minus 2pt}
\setlength{\floatsep}{5pt plus 1pt minus 2pt}
\setlength{\intextsep}{5pt plus 1pt minus 2pt}
\setlength{\abovecaptionskip}{3pt}
\setlength{\belowcaptionskip}{-2pt}
\setlength{\tabcolsep}{4pt}
\renewcommand{\arraystretch}{0.96}

This section tests whether the structural predictions of the theory are visible in real sequential logs and whether they induce nontrivial planning consequences. Concretely, we test three implications: endpoint support is broader than two-sided order support, the reference-path decomposition recovers pooled path values exactly, and sequence-sensitive planning improves decisions precisely in local families with nontrivial order signal. The code is available at: \url{https://github.com/dmivilensky/Order-Sensitive-Sequential-Interventions}.

\paragraph{Dataset, local families, and reward.}
We use the public \texttt{reviewing} event log, which contains repeated review activities and therefore naturally induces local diamonds of the form
\[
(u,w)\to v,
\]
where $u$ and $w$ are two incomparable predecessor activities that are both observed before the target activity $v$ and occur in both orders in the log. Episodes are anchored at the first occurrence of the target activity. For each family we define the four endpoint classes
$
\varnothing, \{u\}, \{w\}, \{u,w\},
$
with the two-sided order split inside the pair endpoint. The downstream reward is
\[
R
=
\mathbf 1\{\texttt{case eventually accepted}\}
-\lambda\,\texttt{remaining\_days\_after\_target},
\qquad
\lambda=0.02,
\]
and we report a sensitivity analysis over multiple values of $\lambda$ below. We retain six families built from the review activities $r_1=\texttt{get review 1}$, $r_2=\texttt{get review 2}$, and $r_3=\texttt{get review 3}$, paired with the targets \texttt{decide} and \texttt{collect reviews}.

\paragraph{Evaluation objects and baselines.}
For each family we estimate pooled endpoint values, pooled local order effects, and pooled path values. On held-out data we compare the following policies: {sequence-sensitive}, which maximizes the full value model $\Phi^{\rho}+\kappa$; {reference-path}, which ignores order corrections; {greedy one-step}; two deterministic baselines {fixed forward} and {fixed reverse}; {endpoint-pooled}; and {frequency}. We also verify that dynamic programming on the truncated ideal lattice returns the same maximizing path as exhaustive search.

\begin{table*}[t]
\centering
\footnotesize
\begin{minipage}[t]{0.58\textwidth}
\centering
\resizebox{\linewidth}{!}{\begin{tabular}{cllccc}
\toprule
ID & target & pair & $(n_{\varnothing},n_u,n_w,n_{uw})$ & $(n_{u\to w}/n_{w\to u})$ & $\hat\kappa\;[95\%\ \mathrm{CI}]$ \\
\midrule
F1 & \texttt{decide} & $(r_1,r_2)$ & $(34,21,21,24)$ & $14/10$ & $4.74\,[-0.51,\,11.29]$ \\
F2 & \texttt{decide} & $(r_1,r_3)$ & $(27,23,28,22)$ & $10/12$ & $1.68\,[-4.73,\,8.33]$ \\
F3 & \texttt{decide} & $(r_2,r_3)$ & $(28,22,27,23)$ & $17/6$ & $-6.95\,[-11.89,\,-1.42]$ \\
F4 & \texttt{collect} & $(r_1,r_2)$ & $(35,21,20,24)$ & $14/10$ & $6.13\,[0.59,\,13.70]$ \\
F5 & \texttt{collect} & $(r_1,r_3)$ & $(27,23,28,22)$ & $10/12$ & $2.84\,[-4.83,\,9.85]$ \\
F6 & \texttt{collect} & $(r_2,r_3)$ & $(29,21,27,23)$ & $17/6$ & $-7.75\,[-12.46,\,-1.75]$ \\
\bottomrule
\end{tabular}
}
\caption{Selected families, endpoint support, two-sided order support, and pooled local order effect. Here $n_{\varnothing},n_u,n_w,n_{uw}$ denote endpoint counts and $n_{u\to w}/n_{w\to u}$ denotes two-sided order support inside the pair endpoint. Families F1--F6 are used throughout the empirical section.}
\label{tab:family_summary_main}
\end{minipage}\hfill
\begin{minipage}[t]{0.40\textwidth}
\centering
\resizebox{\linewidth}{!}{\begin{tabular}{clccc}
\toprule
ID & DP argmax & exhaustive argmax & equal & best value \\
\midrule
F1 & $w\!\to\!u$ & $w\!\to\!u$ & $\checkmark$ & $-7.66$ \\
F2 & $w\!\to\!u$ & $w\!\to\!u$ & $\checkmark$ & $-7.22$ \\
F3 & $u\!\to\!w$ & $u\!\to\!w$ & $\checkmark$ & $-10.89$ \\
F4 & $w\!\to\!u$ & $w\!\to\!u$ & $\checkmark$ & $-9.30$ \\
F5 & $w\!\to\!u$ & $w\!\to\!u$ & $\checkmark$ & $-9.03$ \\
F6 & $u\!\to\!w$ & $u\!\to\!w$ & $\checkmark$ & $-12.59$ \\
\bottomrule
\end{tabular}
}
\caption{Dynamic-programming and exhaustive maximizers coincide in all six families.}
\label{tab:dp_exhaustive_main}
\end{minipage}
\end{table*}

\paragraph{Support separation and family heterogeneity.}
Table~\ref{tab:family_summary_main} and Figure~\ref{fig:empirical_top_compact} show the empirical footprint of the support-separation theory. Endpoint support is consistently nondegenerate, whereas order support is narrower and heterogeneous. The strongest positive order signal is obtained for the pair $(r_1,r_2)$: $\hat\kappa=4.74 \text{ for \texttt{decide}}$, $\hat\kappa=6.13 \text{ for \texttt{collect reviews}}$,
with two-sided support $14/10$ under both targets. By contrast, the pair $(r_2,r_3)$ induces a strong negative order effect under both targets, $\hat\kappa=-6.95 \text{ for \texttt{decide}}$,
$\hat\kappa=-7.75 \text{ for \texttt{collect reviews}}$
while $(r_1,r_3)$ is materially weaker. The event log therefore contains both high-signal and low-signal local diamonds.

\begin{figure*}[t]
    \centering
    \includegraphics[width=0.90\textwidth]{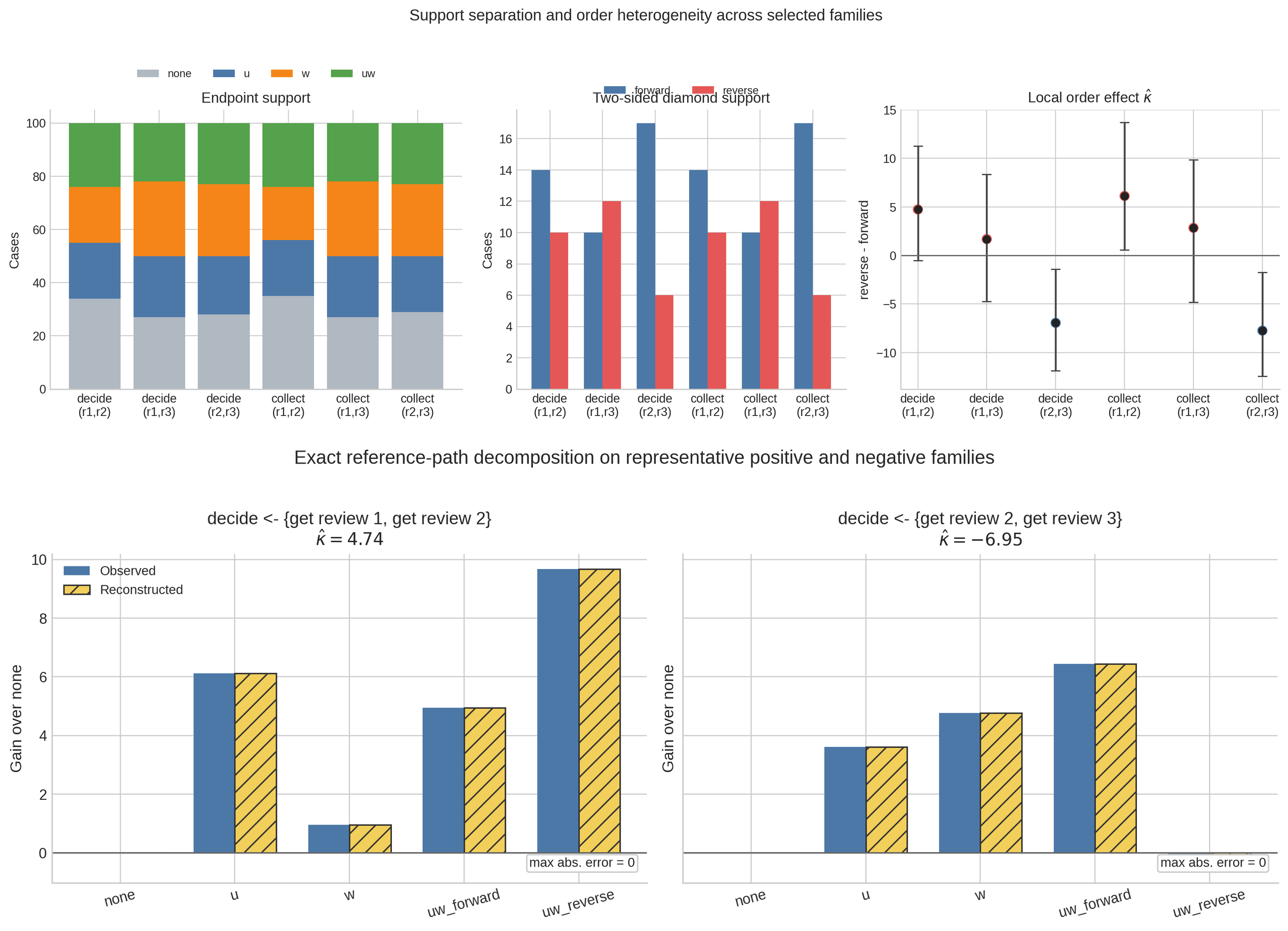}
    \caption{Top: support and order heterogeneity across the six selected families. Bottom: observed versus reconstructed pooled path values for representative positive- and negative-order families. Endpoint support is consistently broader than order support, and pooled path means are recovered exactly by the decomposition \(V(\gamma)=\Phi^{\rho}+\kappa\).}
    \label{fig:empirical_top_compact}
\end{figure*}

\paragraph{Exact decomposition of pooled path values.}
Figure~\ref{fig:empirical_top_compact} also shows the value-theoretic content of the model. For both a positive-order family and a negative-order family, the observed pooled path values are matched exactly by the reference-path decomposition. The reconstruction error is zero up to numerical precision. Empirically, the order-sensitive term is therefore neither decorative nor residual: it is exactly the quantity needed to move from the reference path score to the alternative order.

\begin{table*}[t]
\centering
\footnotesize
\resizebox{0.6\textwidth}{!}{\begin{tabular}{clccccc}
\toprule
ID & selected path & held-out value & $\Delta_{\mathrm{ref}}$ & $\Delta_{\mathrm{greedy}}$ & win rate \\
\midrule
F1 & $w\!\to\!u$ & $-7.84$ & $4.61$ & $3.62$ & $0.80$ \\
F2 & $w\!\to\!u$ & $-9.57$ & $-0.52$ & $3.99$ & $0.27$ \\
F3 & $u\!\to\!w$ & $-13.38$ & $0.00$ & $0.83$ & $0.00$ \\
F4 & $w\!\to\!u$ & $-9.22$ & $5.37$ & $3.94$ & $0.87$ \\
F5 & $w\!\to\!u$ & $-10.78$ & $1.44$ & $5.53$ & $0.57$ \\
F6 & $u\!\to\!w$ & $-15.32$ & $0.00$ & $0.38$ & $0.00$ \\
\bottomrule
\end{tabular}
}
\caption{Held-out policy summary for the six selected families. The selected path is the sequence-sensitive maximizer, $\Delta_{\mathrm{ref}}$ is its gain over the reference-path baseline, and $\Delta_{\mathrm{greedy}}$ is its gain over the greedy one-step baseline.}
\label{tab:policy_summary_main}
\end{table*}

\paragraph{Planning consequences.}
Table~\ref{tab:policy_summary_main} and Figure~\ref{fig:empirical_bottom} summarize the held-out policy comparison. Three conclusions are immediate. First, sequence-sensitive planning is selectively useful: it improves on the reference-path policy precisely in the families with strong order signal. For the positive-order pair $(r_1,r_2)$, the sequence-sensitive policy improves held-out value over the reference-path policy by approximately $4.61$ for \texttt{decide} and $5.37$ for \texttt{collect reviews}. Second, the gains are not driven by comparison to a single weak baseline: the sequence-sensitive policy is compared against reference-path, greedy one-step, both fixed orders, endpoint-pooled, and frequency baselines, and the strongest families are exactly those in which the full policy departs from simpler policies and achieves the best held-out value. Third, Table~\ref{tab:dp_exhaustive_main} shows that dynamic programming and exhaustive search agree in all six families, matching the exact planning claim of Theorem~\ref{thm:dp-main}.

\begin{figure*}[t]
    \centering
    \includegraphics[width=0.70\textwidth]{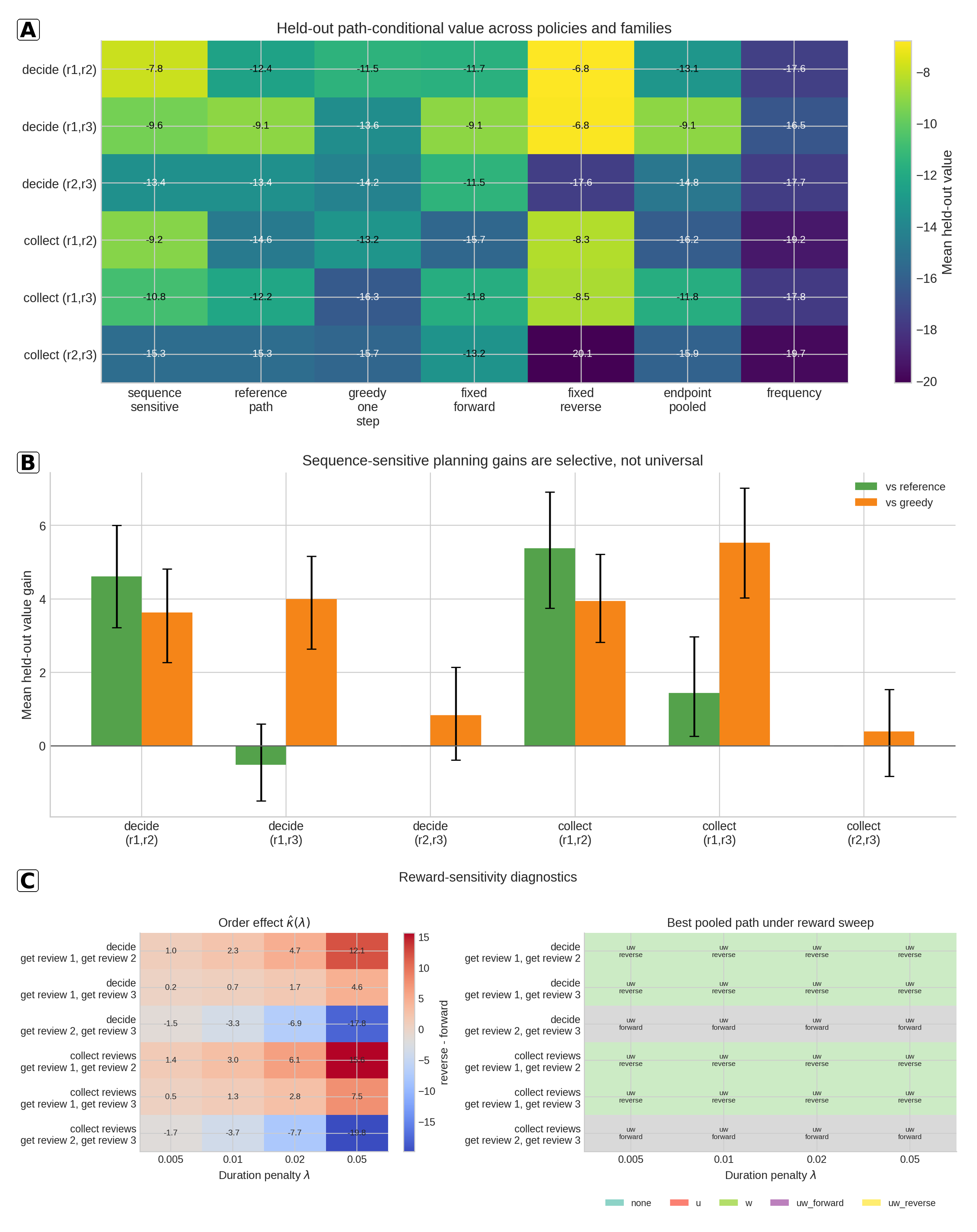}
    \caption{
Policy-level consequences of local order sensitivity.
Panel A reports held-out path-conditional value across families and baselines.
Panel B reports gains of the sequence-sensitive policy relative to the main simpler baselines.
Panel C reports reward-sensitivity diagnostics across duration penalties.
The empirical pattern is selective: sequence-sensitive planning materially improves held-out value in the families with strong local order signal and collapses toward simpler policies when the order effect is weak.
}
    \label{fig:empirical_bottom}
\end{figure*}

\endgroup

\section{Conclusion}

We developed an exact theory of order-sensitive sequential interventions under prerequisite constraints. On the ideal lattice of a finite poset, local diamonds generate all same-endpoint order variation, cube consistency characterizes realizable local order fields, and reduced-state support determines which reference-path and order-sensitive quantities are identifiable. These results also yield exact planning consequences, including an order-insensitivity bound and dynamic programming on the truncated lattice.

\newpage
\appendix

\section{Combinatorial Preliminaries}
\label{app:comb}

\begin{lemma}[Boolean interval lemma]
\label{lem:boolean-interval-app}
Let \(I\in J(P)\), and let \(A\subseteq \mathcal A(I)\) be an antichain. Then, for every \(B\subseteq A\), the set \(I\cup B\) is an ideal. Consequently,
\[
[I,I\cup A]
=
\{I\cup B:\ B\subseteq A\},
\]
and the interval \([I,I\cup A]\) is canonically isomorphic to the Boolean lattice \(B_{|A|}\).
\end{lemma}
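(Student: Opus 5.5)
The plan is to check the ideal property of \(I\cup B\) directly, and then deduce the interval description and the Boolean isomorphism from it.

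\emph{Step 1: \(I\cup B\) is an ideal.} Fix \(B\subseteq A\), and take \(s\in I\cup B\) and \(u\preceq s\); we must show \(u\in I\cup B\).
If \(s\in I\), then \(u\in I\) because \(I\) is an ideal.
If \(s\in B\) and \(u=s\), there is nothing to prove.
If \(s\in B\) and \(u\prec s\), then \(u\in\mathrm{Pred}(s)\). Since \(s\in\mathcal A(I)\), we have \(\mathrm{Pred}(s)\subseteq I\), so \(u\in I\).
This step uses only \(A\subseteq\mathcal A(I)\). The antichain hypothesis is not needed here; it is needed for the interval statement.

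\emph{Step 2: the interval is \(\{I\cup B: B\subseteq A\}\).} The inclusion \(\supseteq\) follows from Step 1, since \(I\subseteq I\cup B\subseteq I\cup A\). For \(\subseteq\), take an ideal \(K\) with \(I\subseteq K\subseteq I\cup A\) and set \(B:=K\setminus I\). Then \(B\subseteq A\), because \(A\cap I=\varnothing\) by the definition of \(\mathcal A(I)\). Since \(I\subseteq K\), we get \(K=I\cup B\).

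\emph{Step 3: Boolean isomorphism.} Define \(\phi:2^A\to[I,I\cup A]\) by \(\phi(B)=I\cup B\).
By Step 2, \(\phi\) is surjective.
It is injective because \(A\cap I=\varnothing\), so \(B=\phi(B)\setminus I\) recovers \(B\).
Both \(\phi\) and its inverse \(K\mapsto K\setminus I\) preserve inclusion, so \(\phi\) is an order isomorphism.
Order isomorphisms of lattices preserve meets and joins. Alternatively, one checks directly that \(\phi\) sends unions and intersections to unions and intersections, which are the lattice operations in \(J(P)\).
Hence \([I,I\cup A]\cong B_{|A|}\). The isomorphism is canonical because it is determined by \(I\) and \(A\) alone.

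\emph{On the antichain hypothesis.} Elements of \(\mathcal A(I)\) are automatically pairwise incomparable. If \(a\prec b\) with \(b\in\mathcal A(I)\), then \(a\in\mathrm{Pred}(b)\subseteq I\), which contradicts \(a\notin I\). So the antichain hypothesis is redundant, and I would note this in a remark.

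None of the steps is a real obstacle. The only point needing care is the disjointness \(A\cap I=\varnothing\), which both the injectivity of \(\phi\) and the identity \(K=I\cup B\) rely on.
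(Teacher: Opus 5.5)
Your proof is correct and follows essentially the same route as the paper: verify the ideal property directly from $\mathrm{Pred}(s)\subseteq I$ for $s\in\mathcal A(I)$, write any $K$ in the interval as $I\cup(K\setminus I)$, and conclude with the inclusion-preserving bijection $B\mapsto I\cup B$. Your extra care fills points the paper leaves implicit: the case $u=s$, the order-preservation of the inverse, the disjointness $A\cap I=\varnothing$, and the observation that $\mathcal A(I)$ is automatically an antichain. However, your Step~1 remark that the antichain hypothesis ``is needed for the interval statement'' contradicts your own later observation, which is the correct one.
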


\begin{proof}
Fix \(B\subseteq A\). Let \(s\in I\cup B\) and \(u\preceq s\). If \(s\in I\), then \(u\in I\subseteq I\cup B\) because \(I\) is an ideal. If \(s\in B\subseteq A\), then \(s\in \mathcal A(I)\), hence \(\mathrm{Pred}(s)\subseteq I\subseteq I\cup B\). Thus \(u\in I\cup B\) in all cases. Hence \(I\cup B\) is an ideal.

Now let \(K\) be any ideal with \(I\subseteq K\subseteq I\cup A\). Then \(K\setminus I\subseteq A\), so \(K=I\cup B\) for \(B=K\setminus I\subseteq A\). Therefore
\[
[I,I\cup A]=\{I\cup B:\ B\subseteq A\}.
\]
The map \(B\mapsto I\cup B\) is inclusion-preserving and bijective, hence an isomorphism with \(B_{|A|}\).
\end{proof}

\begin{lemma}[Reference paths are admissible]
\label{lem:ref-path-admissible}
For every interval \(I\subseteq J\) in \(J(P)\), the reference path \(\rho_{I,J}\) is admissible.
\end{lemma}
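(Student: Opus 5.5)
We must show that each step of \(\rho_{I,J}\) adds an admissible element. Write \(J\setminus I=\{a_1,\dots,a_m\}\) with \(a_1<_{\tau}\cdots<_{\tau}a_m\), and set \(I_k:=I\cup\{a_1,\dots,a_k\}\) for \(k=0,\dots,m\). The plan has two parts. First we show by a direct check that every \(I_k\) is an ideal. Then we show that \(a_{k+1}\in\mathcal A(I_k)\) for each \(k<m\). Together these say exactly that \(\rho_{I,J}\) is an admissible path, since its first state is \(I_0=I\) and its last is \(I_m=J\).

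\textbf{Step 1: each \(I_k\) is an ideal.} Let \(s\in I_k\) and \(u\preceq s\); we must show \(u\in I_k\).
\begin{itemize}
\item If \(s\in I\), then \(u\in I\subseteq I_k\), because \(I\) is an ideal.
\item Otherwise \(s=a_j\) for some \(j\le k\). Since \(J\) is an ideal and \(a_j\in J\), we get \(u\in J\).
\begin{itemize}
\item If \(u\in I\), we are done.
\item If \(u\notin I\), then \(u=a_i\) for some \(i\). If \(u=s\), there is nothing to show. If \(u\prec s\), then \(u<_{\tau}s\), because \(\tau\) extends \(\preceq\). Hence \(i<j\le k\), and so \(u\in I_k\).
\end{itemize}
\end{itemize}

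\textbf{Step 2: admissibility of each step.} Fix \(k<m\). Clearly \(a_{k+1}\notin I_k\), since the \(a_i\) are distinct and lie outside \(I\). Now let \(u\prec a_{k+1}\); we need \(u\in I_k\). Because \(J\) is an ideal, \(u\in J\), so either \(u\in I\) or \(u=a_i\) for some \(i\). In the second case \(u<_{\tau}a_{k+1}\) forces \(i\le k\). In both cases \(u\in I_k\), so \(\mathrm{Pred}(a_{k+1})\subseteq I_k\), i.e. \(a_{k+1}\in\mathcal A(I_k)\). This argument already gives Step 1 by induction from \(I_0=I\), since adding an admissible element to an ideal yields an ideal; the direct check in Step 1 only makes the states explicit.

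There is no genuine obstacle here. The one point to state carefully is the implication \(u\prec s\Rightarrow u<_{\tau}s\), which is exactly the defining property of a linear extension.
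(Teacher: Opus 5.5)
Your proof is correct, and your Step 2 is the paper's argument: any predecessor of \(a_{k+1}\) lies in \(J\), and if it is outside \(I\) it precedes \(a_{k+1}\) in \(\tau\), so it is among \(a_1,\dots,a_k\). Your Step 1 (each \(I_k\) is an ideal, so \(\mathcal A(I_k)\) is defined) and the check \(a_{k+1}\notin I_k\) are points the paper leaves implicit, but they do not change the approach.
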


\begin{proof}
Write
\[
J\setminus I=\{a_1,\dots,a_m\},
\qquad
a_1<_{\tau}\cdots<_{\tau} a_m.
\]
Define
\[
I_0:=I,\qquad I_r:=I\cup\{a_1,\dots,a_r\},
\qquad r=1,\dots,m.
\]
Fix \(r<m\). Let \(u\prec a_{r+1}\). Since \(J\) is an ideal and \(a_{r+1}\in J\), one has \(u\in J\). If \(u\in I\), then \(u\in I_r\). If \(u\in J\setminus I\), then \(\tau\) extends \(\preceq\), so \(u<_{\tau}a_{r+1}\), hence \(u=a_j\) for some \(j\le r\), and therefore \(u\in I_r\). Thus \(\mathrm{Pred}(a_{r+1})\subseteq I_r\), proving \(a_{r+1}\in \mathcal A(I_r)\).
\end{proof}

\begin{lemma}[Paths are linear extensions]
\label{lem:linext-app}
Let \(I\subseteq J\) in \(J(P)\). Then \(\Gamma(I,J)\) is in canonical bijection with the set of linear extensions of the induced poset on \(J\setminus I\).
\end{lemma}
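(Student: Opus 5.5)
The plan is to write down the two maps between admissible paths and linear extensions explicitly and check that they are mutually inverse. Set \(Q:=J\setminus I\) with the order induced from \(\preceq\), and write \(m:=|Q|\).

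\emph{From paths to linear extensions.} Take \(\gamma=(I_0,\dots,I_L)\in\Gamma(I,J)\) with added elements \(a_0,\dots,a_{L-1}\).
\begin{enumerate}
    \item Each step adds exactly one new element, since \(a_\ell\notin I_\ell\). So the \(a_\ell\) are pairwise distinct.
    \item Their union together with \(I\) is \(J\), so \(L=m\) and \((a_0,\dots,a_{m-1})\) enumerates \(Q\).
    \item This enumeration is a linear extension. If \(a_j\prec a_k\), then admissibility gives \(a_j\in\mathrm{Pred}(a_k)\subseteq I_k=I\cup\{a_0,\dots,a_{k-1}\}\). Because \(a_j\notin I\), this forces \(j<k\).
\end{enumerate}
Call this map \(\Psi\).

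\emph{From linear extensions to paths.} Take a linear extension \((b_1,\dots,b_m)\) of \(Q\) and set \(I_r:=I\cup\{b_1,\dots,b_r\}\). We must check \(b_{r+1}\in\mathcal A(I_r)\). Clearly \(b_{r+1}\notin I_r\). Now let \(u\prec b_{r+1}\). Since \(J\) is an ideal, \(u\in J\).
\begin{enumerate}
    \item If \(u\in I\), then \(u\in I_r\).
    \item Otherwise \(u\in Q\). Because \((b_i)\) is a linear extension, \(u=b_j\) for some \(j\le r\), so again \(u\in I_r\).
\end{enumerate}
This is exactly the argument of Lemma~\ref{lem:ref-path-admissible}, with \(\tau\) replaced by an arbitrary linear extension. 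Note that each \(I_r\) is automatically an ideal, since it is obtained from the ideal \(I\) by successive admissible additions. The endpoints are \(I_0=I\) and \(I_m=J\). This gives a map \(\Psi'\).

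\emph{Inverse check.} The two maps invert each other essentially by construction. A path is determined by its initial ideal and its sequence of added elements, and conversely \(\Psi\) recovers the sequence \(a_\ell\) as \(I_{\ell+1}\setminus I_\ell\). Canonicity holds because neither map involves any choice.

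I expect no real obstacle. The only point needing care is the counting step showing that every admissible path from \(I\) to \(J\) has length exactly \(|J\setminus I|\), which follows from distinctness of the added elements and strict growth of the \(I_\ell\).
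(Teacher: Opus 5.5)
Your proposal is correct and follows essentially the same route as the paper's proof. The forward map reads off the added elements: the path length is forced to be $|J\setminus I|$, and the order is respected because a predecessor must already be present. The backward map checks admissibility by splitting predecessors into those in $I$ and those earlier in the extension, and the two constructions are mutually inverse. You are in fact slightly more explicit than the paper, for instance in using that $J$ is an ideal to place every predecessor in $J$, and in noting that the intermediate sets $I_r$ are themselves ideals.
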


\begin{proof}
Let
\[
\gamma=(I_0,\dots,I_L)\in \Gamma(I,J).
\]
Since each step adds one new element, necessarily \(L=|J\setminus I|\). Write
\[
I_{\ell+1}=I_\ell\cup\{a_{\ell+1}\},
\qquad \ell=0,\dots,L-1.
\]
Then \(J\setminus I=\{a_1,\dots,a_L\}\). If \(a_r\prec a_s\) in the induced poset on \(J\setminus I\), then \(a_r\) must be present before \(a_s\) can be admissibly added, so \(r<s\). Thus \((a_1,\dots,a_L)\) is a linear extension.

Conversely, let \((a_1,\dots,a_L)\) be a linear extension of the induced poset on \(J\setminus I\). Define
\[
I_0:=I,\qquad I_\ell:=I\cup\{a_1,\dots,a_\ell\}.
\]
Fix \(\ell<L\). Every predecessor of \(a_{\ell+1}\) that lies in \(J\setminus I\) appears among \(a_1,\dots,a_\ell\); every predecessor already in \(I\) is present from the start. Therefore
\[
\mathrm{Pred}(a_{\ell+1})\subseteq I_\ell,
\]
so \(a_{\ell+1}\in \mathcal A(I_\ell)\). Thus \((I_0,\dots,I_L)\in \Gamma(I,J)\). The two constructions are inverse.
\end{proof}

\begin{lemma}[Adjacent-swap connectivity]
\label{lem:adj-swap-app}
Fix \(I\subseteq J\) in \(J(P)\). Any two paths in \(\Gamma(I,J)\) are connected by a finite sequence of adjacent swaps of consecutive incomparable elements in the associated linear extensions.
\end{lemma}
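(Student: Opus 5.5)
By Lemma~\ref{lem:linext-app}, it suffices to work with linear extensions of the induced poset \(Q\) on \(J\setminus I\). The task is then to show that any two linear extensions \(\sigma,\sigma'\) of \(Q\) are joined by a finite chain of adjacent transpositions. Each transposition must exchange two consecutive entries that are incomparable in \(Q\). Incomparability in \(Q\) is the same as incomparability in \(P\), because \(Q\) carries the induced order. The plan is a bubble-sort argument: transform \(\sigma\) into \(\sigma'\) one position at a time, by induction on the number of inversions
\[
\operatorname{inv}(\sigma,\sigma') := \#\{\{a,b\}: a \text{ precedes } b \text{ in } \sigma,\ b \text{ precedes } a \text{ in } \sigma'\}.
\]

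The key step is as follows. Suppose \(\sigma\neq\sigma'\). Then some pair of consecutive entries \(\sigma_r,\sigma_{r+1}\) of \(\sigma\) appears in the opposite order in \(\sigma'\). This holds because a permutation with no descents relative to \(\sigma'\) must equal \(\sigma'\).

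This consecutive pair is incomparable in \(Q\). If \(\sigma_r\prec\sigma_{r+1}\), then every linear extension would place \(\sigma_r\) first, including \(\sigma'\). If \(\sigma_{r+1}\prec\sigma_r\), then \(\sigma\) would not be a linear extension.

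Now swap the pair to obtain \(\sigma''\). It is again a linear extension, since the only changed relative order is between two incomparable elements. Moreover, \(\operatorname{inv}(\sigma'',\sigma')=\operatorname{inv}(\sigma,\sigma')-1\). Induction then terminates after exactly \(\operatorname{inv}(\sigma,\sigma')\) swaps. In particular at most \(\binom{L}{2}\) swaps are needed, which is also the bound used later in Theorem~\ref{thm:order-bound-main}.

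Finally, translate back through the bijection of Lemma~\ref{lem:linext-app}. A swap at positions \(r,r+1\) changes only the intermediate ideal \(I_r\), replacing \(I_{r-1}\cup\{\sigma_r\}\) by \(I_{r-1}\cup\{\sigma_{r+1}\}\). The main point to check, though it is easy, is that both intermediate sets are ideals, so that the new sequence really is an admissible path. This follows because both \(\sigma\) and \(\sigma''\) are linear extensions, or directly from Lemma~\ref{lem:boolean-interval-app} applied to the antichain \(\{\sigma_r,\sigma_{r+1}\}\subseteq\mathcal A(I_{r-1})\). Consequently, each adjacent swap is exactly a diamond swap on \((I_{r-1};u,v)\), which is what Theorem~\ref{thm:diamond-representation-main} subsequently uses.
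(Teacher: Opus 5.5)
Your proof is correct, but it organizes the sorting differently from the paper.

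\textbf{The paper's route.} It inducts on $L$. It takes the last element $b_L$ of the target extension and notes that $b_L$ is maximal, so every entry to its right in the source is incomparable with it. It moves $b_L$ rightward to the final position, deletes it, and recurses on the smaller poset.

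\textbf{Your route.} You induct on $\operatorname{inv}(\sigma,\sigma')$. You use a purely local fact: any adjacent descent of $\sigma$ relative to $\sigma'$ must be an incomparable pair. Hence any such descent can be swapped greedily, with no need to single out a maximal element.

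\textbf{What each buys.} The paper's version is the natural poset-theoretic induction, in the same spirit as the later peel-off-the-maximum reference-tree construction. Yours gives an explicit potential function: exactly $\operatorname{inv}(\sigma,\sigma')$ swaps suffice. Since every adjacent swap changes the inversion count by exactly one, that number is also the minimum. So $N_{\mathrm{swap}}(\gamma,\gamma')=\operatorname{inv}(\sigma,\sigma')\le\binom{L}{2}$.

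\textbf{Link to Lemma~\ref{lem:inversion-app}.} This is precisely the step the paper's proof of that lemma leaves implicit. There, the observation that each swap changes the count by one yields only a lower bound on the number of swaps. The upper bound needs a count-decreasing swap that stays within linear extensions, which is what your descent argument supplies. The paper's procedure also uses exactly $\operatorname{inv}(\sigma,\sigma')$ swaps, since each rightward move of $b_L$ passes an element that precedes $b_L$ in the target, but the paper never records this.

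\textbf{Your last paragraph.} The translation of swaps into diamond moves goes beyond the lemma itself. It duplicates Proposition~\ref{prop:swap-diamond-app}, and it is correct.
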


\begin{proof}
By Lemma~\ref{lem:linext-app}, it suffices to prove the claim for linear extensions of the finite poset \(P|_{J\setminus I}\). Let
\[
a_1,\dots,a_L
\qquad\text{and}\qquad
b_1,\dots,b_L
\]
be two linear extensions. We prove the claim by induction on \(L\).

For \(L\le 1\), there is nothing to show. Assume \(L\ge 2\). In the target extension, let \(b_L\) be the final element. Since \(b_L\) is maximal in the induced poset, every element appearing to its right in the source extension is incomparable with \(b_L\). Thus \(b_L\) can be moved rightward, one adjacent swap at a time, until it occupies the final position. Removing \(b_L\) from both extensions yields two linear extensions of a smaller poset, and the induction hypothesis applies to the first \(L-1\) positions.
\end{proof}

\begin{proposition}[Adjacent swaps are diamonds]
\label{prop:swap-diamond-app}
Let \(\gamma\in \Gamma(I,J)\), and let its associated linear extension contain a neighboring incomparable pair \((u,v)\) with \(u<_{\tau}v\). Let \(\gamma'\) be obtained by swapping these two adjacent entries. Then \(\gamma'\in \Gamma(I,J)\), and the local replacement from \(\gamma\) to \(\gamma'\) is exactly a diamond swap.
\end{proposition}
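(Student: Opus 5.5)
The plan is to work directly through the bijection of Lemma~\ref{lem:linext-app}. Write the linear extension of \(\gamma\) as \((a_1,\dots,a_L)\) with \(I_\ell=I\cup\{a_1,\dots,a_\ell\}\). Suppose the neighboring incomparable pair sits at positions \(r+1,r+2\), with \(\{a_{r+1},a_{r+2}\}=\{u,v\}\). The swapped sequence \(\gamma'\) is again a linear extension of \(P|_{J\setminus I}\). To see this, note that the only relative order that changes is between \(u\) and \(v\), and these are incomparable. Every relation \(a_i\prec a_j\) with \(\{i,j\}\neq\{r+1,r+2\}\) keeps its orientation. By the converse direction of Lemma~\ref{lem:linext-app}, the associated sequence of ideals is therefore an admissible path in \(\Gamma(I,J)\).

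Next I identify the local replacement with a diamond. Both \(\gamma\) and \(\gamma'\) agree on \(I_0,\dots,I_r\) and on \(I_{r+2},\dots,I_L\), since these sets depend only on the unordered prefix. They differ only in the single intermediate state: \(I_r\cup\{u\}\) versus \(I_r\cup\{v\}\).

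I must check that \((I_r;u,v)\) is a diamond, i.e.\ \(u,v\in\mathcal A(I_r)\) and \(u\parallel v\). Incomparability is a hypothesis. For admissibility, admissibility of \(\gamma\) at step \(r\) gives that the first of the two, say \(a_{r+1}\), lies in \(\mathcal A(I_r)\). Applying the same fact to \(\gamma'\), which we already showed is admissible, gives that \(a_{r+2}\) lies in \(\mathcal A(I_r)\) too. Alternatively, \(\mathrm{Pred}(a_{r+2})\subseteq I_r\cup\{a_{r+1}\}\) and \(a_{r+1}\not\prec a_{r+2}\), so \(\mathrm{Pred}(a_{r+2})\subseteq I_r\).

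Then \(\{u,v\}\subseteq\mathcal A(I_r)\) is an antichain, and Lemma~\ref{lem:boolean-interval-app} shows that \([I_r,I_r\cup\{u,v\}]\) is the Boolean square. The two-edge segments
\[
I_r\to I_r\cup\{a_{r+1}\}\to I_{r+2},\qquad I_r\to I_r\cup\{a_{r+2}\}\to I_{r+2}
\]
are exactly the two boundary paths of \(d=(I_r;u,v)\), labeled so that \(u<_\tau v\). As formal \(1\)-chains, \(\gamma-\gamma'=\pm\partial d\). The sign is \(+\) or \(-\) according to whether \(\gamma\) traverses the \(u\)-first or the \(v\)-first side, under the orientation convention for \(\partial d\).

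I expect no genuine obstacle. The only point needing care is confirming that the second element is admissible already at \(I_r\), not merely at \(I_r\cup\{a_{r+1}\}\); this is exactly where incomparability is used. Beyond that, the remaining work is bookkeeping of the sign convention for \(\partial d\) relative to the \(<_\tau\) labeling.
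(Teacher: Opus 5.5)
Your proposal is correct and follows essentially the same route as the paper. The paper takes the prefix ideal before the swapped pair, uses incomparability to place both elements in its admissible set, and identifies the two two-step segments as the boundary paths of the diamond. You additionally verify $\gamma'\in\Gamma(I,J)$ explicitly through Lemma~\ref{lem:linext-app} and track the sign of $\partial d$, which the paper leaves implicit, but the underlying argument is identical.
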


\begin{proof}
Let \(K\) be the ideal formed by the prefix preceding \(u\) and \(v\). Since \(u\) and \(v\) are consecutive and incomparable in the extension, all predecessors of either element lie already in \(K\). Therefore
\[
u,v\in \mathcal A(K).
\]
Since \(u\parallel v\), the two-step paths
\[
K\to K\cup\{u\}\to K\cup\{u,v\},
\qquad
K\to K\cup\{v\}\to K\cup\{u,v\}
\]
form a diamond, and replacing one side by the other yields \(\gamma'\).
\end{proof}

\begin{proof}[Proof of Theorem~\ref{thm:diamond-representation-main}]
By Lemma~\ref{lem:linext-app}, \(\gamma\) and \(\gamma'\) correspond to linear extensions of the same induced poset on \(J\setminus I\). By Lemma~\ref{lem:adj-swap-app}, one extension is transformed into the other by finitely many adjacent swaps of neighboring incomparable elements. By Proposition~\ref{prop:swap-diamond-app}, each such swap is a diamond swap. Recording the orientation of each swap yields \eqref{eq:diamond-representation}.
\end{proof}

\section{Proofs for Curvature and Exact Decomposition}
\label{app:struct}

\begin{proof}[Proof of Theorem~\ref{thm:path-independence-main}]
{\((3)\Rightarrow(1)\).}
Let \(\gamma=(I_0,\dots,I_L)\in \Gamma(I,J)\) with
\[
I_{\ell+1}=I_\ell\cup\{a_\ell\}.
\]
Then
\[
V_x(\gamma)
=
\sum_{\ell=0}^{L-1}\bigl(\Phi_x(I_{\ell+1})-\Phi_x(I_\ell)\bigr)
=
\Phi_x(J)-\Phi_x(I).
\]
Hence \(V_x(\gamma)\) depends only on the endpoints.

\smallskip
{\((1)\Rightarrow(2)\).}
Apply endpoint-independence to the two boundary paths of any diamond.

\smallskip
{\((2)\Rightarrow(3)\).}
Fix \(I\in J(P)\). Define \(\Phi_x(I)\) as the value of any admissible path from \(\varnothing\) to \(I\). This is well-defined: if \(\rho,\rho'\in \Gamma(\varnothing,I)\), then by Theorem~\ref{thm:diamond-representation-main} one is obtained from the other by a finite sequence of diamond swaps. Since each swap changes path value by the corresponding curvature and all curvatures vanish, \(V_x(\rho)=V_x(\rho')\).

Now let \(a\in \mathcal A(I)\). For any \(\rho\in \Gamma(\varnothing,I)\), the concatenation \(\rho\cdot (I\to I\cup\{a\})\) is admissible from \(\varnothing\) to \(I\cup\{a\}\). Therefore
\[
\Phi_x(I\cup\{a\})
=
V_x(\rho)+g_x(I,a)
=
\Phi_x(I)+g_x(I,a),
\]
which yields \eqref{eq:endpoint-potential-gradient}. Equation \eqref{eq:path-independence-telescoping} was already established in the proof of \((3)\Rightarrow(1)\).
\end{proof}

\begin{lemma}[Swap-sum identity]
\label{lem:swap-sum-app}
Let \(\gamma,\gamma'\in \Gamma(I,J)\), and let \((d_m,\varepsilon_m)_{m=1}^M\) be a diamond-swap sequence taking \(\gamma\) to \(\gamma'\). Then
\begin{equation}
V_x(\gamma')-V_x(\gamma)=\sum_{m=1}^M \varepsilon_m\,\kappa_x(d_m).
\label{eq:swap-sum-identity}
\end{equation}
\end{lemma}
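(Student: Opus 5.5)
The plan is to argue by telescoping along the swap sequence, so that the identity reduces to checking a single diamond swap. Write the sequence as \(\gamma=\gamma_0,\gamma_1,\dots,\gamma_M=\gamma'\), where \(\gamma_m\) is obtained from \(\gamma_{m-1}\) by the swap on \(d_m\) with orientation \(\varepsilon_m\). Then
\[
V_x(\gamma')-V_x(\gamma)=\sum_{m=1}^M\bigl(V_x(\gamma_m)-V_x(\gamma_{m-1})\bigr),
\]
and it suffices to show \(V_x(\gamma_m)-V_x(\gamma_{m-1})=\varepsilon_m\kappa_x(d_m)\) for each \(m\).

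For a single swap, Proposition~\ref{prop:swap-diamond-app} describes the situation. Let \(d_m=(K;u,v)\) with \(u<_{\tau}v\). The paths \(\gamma_{m-1}\) and \(\gamma_m\) share the prefix from \(I\) to \(K\) and the suffix from \(K\cup\{u,v\}\) to \(J\). They differ only in the two middle edges: one path uses \(K\to K\cup\{u\}\to K\cup\{u,v\}\), and the other uses \(K\to K\cup\{v\}\to K\cup\{u,v\}\).

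The suffix contributes identically to both paths. This holds because the edge field \(g_x(I_\ell,a_\ell)\) depends only on the current ideal and the added element, and after the swap the ideals along the suffix coincide: both paths reach \(K\cup\{u,v\}\) at the same step and then add the same elements. Since \(V_x\) is a sum over edges by \eqref{eq:path-valuation}, the difference equals the difference of the two middle two-step sums. By \eqref{eq:diamond-curvature}, that difference is \(+\kappa_x(d_m)\) when the swap replaces the \(u\)-first side with the \(v\)-first side, and \(-\kappa_x(d_m)\) in the reverse direction.

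The only real point of care is the sign convention, which must match the one in \eqref{eq:diamond-representation}. I would fix it explicitly: \(\varepsilon_m=+1\) means the swap moves from the \(u\)-first boundary to the \(v\)-first boundary, so that \(\varepsilon_m\kappa_x(d_m)\) is the value change. One subtlety is that \(\gamma-\gamma'\) as 1-chains carries the opposite overall sign relative to the direction \(\gamma\to\gamma'\). I would therefore state the orientation directly in terms of the swap direction rather than deduce it from the chain identity, and note that any consistent convention yields \eqref{eq:swap-sum-identity}. Summing the per-swap identities then completes the proof.
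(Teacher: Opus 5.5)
Your proposal is correct and is essentially the paper's proof. The paper inducts on \(M\), which is the same as your telescoping, and takes the single-swap case as the definition of \(\kappa_x\). Your explicit check that the shared prefix and suffix edges cancel, and your explicit fixing of the orientation of \(\varepsilon_m\), supply details the paper leaves implicit.
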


\begin{proof}
Proceed by induction on \(M\). For \(M=1\), the statement is exactly the definition of \(\kappa_x\). Assume the statement for \(M-1\), and let \(\tilde\gamma\) be the intermediate path after the first \(M-1\) swaps. Then
\[
V_x(\gamma')-V_x(\gamma)
=
\bigl(V_x(\gamma')-V_x(\tilde\gamma)\bigr)
+
\bigl(V_x(\tilde\gamma)-V_x(\gamma)\bigr).
\]
The first term is \(\varepsilon_M\kappa_x(d_M)\), and the second term is \(\sum_{m=1}^{M-1}\varepsilon_m\kappa_x(d_m)\) by the induction hypothesis.
\end{proof}

\begin{proof}[Proof of Theorem~\ref{thm:reference-decomposition-main}]
Apply Lemma~\ref{lem:swap-sum-app} with \(\gamma=\rho_{I,J}\) and \(\gamma'=\gamma\). The identity \eqref{eq:reference-path-score} gives \eqref{eq:reference-path-decomposition}.
\end{proof}

\begin{proof}[Proof of Corollary~\ref{cor:mobius-main}]
Let \(\zeta\) denote the zeta function of the finite poset \(J(P)\):
\[
\zeta(K,I)=\one\{K\subseteq I\}.
\]
By standard incidence-algebra theory \cite{rota1964,stanley2011ec1}, \(\zeta\) is invertible under convolution, with inverse \(\mu\), the M\"obius function. The representation \eqref{eq:mobius-forward} is precisely
\[
\Phi_x=\zeta * \theta_x.
\]
Hence
\[
\theta_x=\mu * \Phi_x,
\]
which yields \eqref{eq:mobius-inverse}. Uniqueness follows from invertibility of \(\zeta\).

Under the assumptions of Theorem~\ref{thm:path-independence-main},
\[
g_x(I,a)=\Phi_x(I\cup\{a\})-\Phi_x(I).
\]
Subtracting the M\"obius expansions for \(I\cup\{a\}\) and \(I\) cancels exactly the terms \(K\) not containing \(a\), and leaves \eqref{eq:mobius-edge}.
\end{proof}

\section{Proofs for Cube Consistency and Integrability}
\label{app:integrability}

\subsection{The reference tree}

\begin{lemma}[The reference tree is a rooted spanning tree]
\label{lem:tree-app}
The directed graph \(T_\tau\) with vertex set \(J(P)\) and edges
\[
K^- \to K,\qquad K\in J(P)\setminus\{\varnothing\},
\]
is a rooted spanning tree of the upward-oriented Hasse diagram of \(J(P)\), rooted at \(\varnothing\).
\end{lemma}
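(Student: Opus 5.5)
The plan is to verify three things in turn: every edge \(K^-\to K\) is a genuine edge of the upward-oriented Hasse diagram of \(J(P)\); every nonempty vertex has exactly one incoming tree edge; and following these edges backward from any vertex reaches \(\varnothing\) without cycling.

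\emph{Tree edges are Hasse edges.} Fix a nonempty ideal \(K\) and write \(m=m(K)\). First we show that \(K^-\) is an ideal. Let \(s\in K^-\) and \(u\preceq s\). Since \(K\) is an ideal, \(u\in K\). If \(u=m\), then \(m\preceq s\) with \(s\neq m\), so \(m<_{\tau} s\) because \(\tau\) extends \(\preceq\). This contradicts the \(<_{\tau}\)-maximality of \(m\) in \(K\), so \(u\in K^-\). Next we show \(m\in\mathcal A(K^-)\). Any \(u\prec m\) lies in \(K\) because \(K\) is an ideal, and \(u\neq m\), so \(u\in K^-\). Hence \(\mathrm{Pred}(m)\subseteq K^-\), and \(K^-\to K=K^-\cup\{m\}\) is an admissible edge, i.e.\ a cover relation in \(J(P)\). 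This is the same argument as in Lemma~\ref{lem:ref-path-admissible}, applied to the last step of \(\rho_{\varnothing,K}\).

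\emph{Unique parents and rooting.} By construction each nonempty \(K\) has exactly one tree edge entering it, namely from \(K^-\), and \(\varnothing\) has none. Since \(|K^-|=|K|-1\), the backward sequence \(K, K^-, (K^-)^-,\dots\) strictly decreases in cardinality. It therefore terminates at \(\varnothing\) after exactly \(|K|\) steps, giving a directed path from \(\varnothing\) to \(K\); this path is precisely \(\rho_{\varnothing,K}\). A directed graph on a finite vertex set in which every non-root vertex has in-degree one, the root has in-degree zero, and every vertex is reachable from the root is an arborescence. Its \(|J(P)|-1\) edges and connectivity also rule out undirected cycles. Since it contains every vertex of \(J(P)\), it is spanning.

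No step is a real obstacle. The only point that needs care is checking that \(K^-\) is downward closed, which uses that \(\tau\) is a linear extension. Without that, removing the \(\tau\)-maximum could delete an element lying below something that remains.
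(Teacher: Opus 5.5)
Your proof is correct and follows the paper's argument: each nonempty \(K\) has the unique incoming edge \(K^-\to K\), \(\varnothing\) has none, and \(K\mapsto K^-\) strictly decreases cardinality, so every vertex is reachable from the root. You also check explicitly that \(K^-\) is an ideal and that \(m(K)\in\mathcal A(K^-)\), so the tree edges really are Hasse edges; the paper leaves this step implicit, and including it is worthwhile.
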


\begin{proof}
Every nonempty ideal \(K\) has exactly one incoming edge in \(T_\tau\), namely
\[
K^- \to K.
\]
The empty ideal has no incoming edge. Thus \(T_\tau\) is a rooted arborescence if every vertex is reachable from \(\varnothing\). But iterating
\[
K\mapsto K^-
\]
strictly decreases cardinality and eventually reaches \(\varnothing\), so every \(K\) is connected to \(\varnothing\) by a unique directed path. Hence \(T_\tau\) is a rooted spanning tree.
\end{proof}

\subsection{Necessity of cube consistency}

\begin{proof}[Proof of Proposition~\ref{prop:bianchi-main}]
Fix a three-cube \((I;u,v,w)\) with
\[
u<_{\tau}v<_{\tau}w.
\]
Expanding the six curvature terms in \eqref{eq:cube-consistency} gives
\begin{align*}
&\kappa(I;u,v)
-\kappa(I\cup\{w\};u,v)
+\kappa(I;u,w)
-\kappa(I\cup\{v\};u,w)
+\kappa(I;v,w)
-\kappa(I\cup\{u\};v,w)\\
&=
\bigl[g(I,v)+g(I\cup\{v\},u)-g(I,u)-g(I\cup\{u\},v)\bigr]\\
&\quad
-\bigl[g(I\cup\{w\},v)+g(I\cup\{v,w\},u)-g(I\cup\{w\},u)-g(I\cup\{u,w\},v)\bigr]\\
&\quad
+\bigl[g(I,w)+g(I\cup\{w\},u)-g(I,u)-g(I\cup\{u\},w)\bigr]\\
&\quad
-\bigl[g(I\cup\{v\},w)+g(I\cup\{v,w\},u)-g(I\cup\{v\},u)-g(I\cup\{u,v\},w)\bigr]\\
&\quad
+\bigl[g(I,w)+g(I\cup\{w\},v)-g(I,v)-g(I\cup\{v\},w)\bigr]\\
&\quad
-\bigl[g(I\cup\{u\},w)+g(I\cup\{u,w\},v)-g(I\cup\{u\},v)-g(I\cup\{u,v\},w)\bigr].
\end{align*}
Every edge term appears exactly twice with opposite signs, so the total is zero. This is precisely \eqref{eq:cube-consistency}.
\end{proof}

\subsection{Constructive zero-gauge integrability}

\begin{theorem}[Zero-gauge integrability]
\label{thm:zero-gauge-app}
Let \(\kappa\) be a cube-consistent diamond field. Then there exists a unique edge field \(g^{(0)}\) such that
\begin{enumerate}
    \item \(g^{(0)}(K^-,m(K))=0\) for every nonempty ideal \(K\);
    \item the curvature field of \(g^{(0)}\) equals \(\kappa\).
\end{enumerate}
\end{theorem}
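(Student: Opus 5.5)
We construct \(g^{(0)}\) by recursion on the cardinality of the target ideal of an edge. Uniqueness comes from the recursion being forced. Existence comes from checking the remaining diamonds with the Discrete Bianchi identity (Proposition~\ref{prop:bianchi-main}) and cube consistency.

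\emph{Step 1: the recursion.} Consider an admissible edge \(I\to K:=I\cup\{a\}\). If \(a=m(K)\), it is a tree edge and we set \(g^{(0)}(I,a)=0\). Otherwise let \(w:=m(K)\), so \(w\in I\) and \(a<_{\tau}w\). Both \(a\) and \(w\) are maximal in \(K\): \(a\) because \(K\setminus\{a\}=I\) is an ideal, and \(w\) because \(\tau\) extends \(\preceq\). Hence \(a\parallel w\), and \(I':=K\setminus\{a,w\}\) is an ideal with \(a,w\in\mathcal A(I')\). So \((I';a,w)\) is a diamond with top \(K\); we call it the \emph{defining diamond} of the edge \((I,a)\). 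Solving its curvature equation for the edge \((I'\cup\{w\},a)=(I,a)\) gives
\[
g^{(0)}(I,a):=\kappa(I';a,w)+g^{(0)}(I',a)+g^{(0)}(I'\cup\{a\},w)-g^{(0)}(I',w).
\]
Here \(g^{(0)}(I'\cup\{a\},w)=0\), since it is the tree edge into \(K\). The other two terms are edges whose targets have cardinality \(|K|-1\). Induction on \(|K|\) therefore defines \(g^{(0)}\) on every edge.

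\emph{Step 2: uniqueness.} Any field satisfying (1) and (2) must vanish on tree edges. It must also satisfy the curvature equation of each defining diamond, which is exactly the recursion above. By induction on \(|K|\), any such field coincides with \(g^{(0)}\).

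\emph{Step 3: existence.} This is the main step. Let \(\kappa^{g}\) denote the curvature field of \(g^{(0)}\). We show \(\kappa^{g}(d)=\kappa(d)\) for every diamond \(d=(I;u,v)\), \(u<_\tau v\), by induction on the cardinality of its top \(K=I\cup\{u,v\}\).

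If \(m(K)=v\), then \(d\) is the defining diamond of the edge \((I\cup\{v\},u)\), and agreement holds by construction.

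Otherwise \(w:=m(K)\in I\) with \(u<_\tau v<_\tau w\), and \(w\) is maximal in \(K\), so \(w\) is incomparable to \(u\) and \(v\). Set \(I'':=I\setminus\{w\}\). This is an ideal with \(u,v,w\in\mathcal A(I'')\), because no predecessor of \(u\) or \(v\) equals \(w\). So \((I'';u,v,w)\) is a three-cube and \(d=(I''\cup\{w\};u,v)\) is one of its faces. Its other five faces are of two kinds:
\begin{itemize}
\item \((I'';u,v)\), \((I'';u,w)\) and \((I'';v,w)\) have smaller tops, so agreement holds by the induction hypothesis;
\item \((I''\cup\{v\};u,w)\) and \((I''\cup\{u\};v,w)\) are exactly the defining diamonds of the edges \((I''\cup\{v,w\},u)\) and \((I''\cup\{u,w\},v)\), so agreement holds by construction.
\end{itemize}
Both \(\kappa^{g}\) (by Proposition~\ref{prop:bianchi-main}) and \(\kappa\) (by hypothesis) satisfy \eqref{eq:cube-consistency} on this cube. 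They agree on five faces, and the face \(d\) enters with coefficient \(-1\), so they agree on \(d\). This closes the induction.

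The delicate part is the bookkeeping in Step 3: verifying that \(I''\) is an ideal carrying a genuine three-cube, and that the two "same-top" faces are precisely defining diamonds rather than faces not yet controlled. Both facts rest on \(w\) being simultaneously the \(\tau\)-maximum and a maximal element of \(K\).
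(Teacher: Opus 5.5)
Your proof is correct, and its skeleton matches the paper's. For a non-tree edge \((I,a)\) one has \(m(I\cup\{a\})=m(I)\), so your defining diamond is exactly the paper's \((I\setminus\{b\};a,b)\) with \(b=m(I)\). Since both \((I'\cup\{a\},w)\) and \((I',w)\) are tree edges, your formula collapses to the paper's recursion \(g^{(0)}(I,a)=g^{(0)}(I\setminus\{b\},a)+\kappa(I\setminus\{b\};a,b)\). Your existence check also runs through the same three-cube \((I\setminus\{m(I)\};u,v,m(I))\).

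The execution differs in two places. For uniqueness, the paper writes the difference of two solutions as a gradient via Theorem~\ref{thm:path-independence-main}, and hence via the diamond-representation theorem. It then kills the potential along the reference tree. You observe instead that conditions (1)--(2) already contain the recursion, which is more elementary and self-contained. For the cube step, the paper inducts on \(\Delta(I;u,v)\), pushes all four edges of \((I;u,v)\) through the recursion, and matches the result against the cube relation by hand. You induct on top size and argue that \(\kappa_{g^{(0)}}\) and \(\kappa\) satisfy the same relation and agree on five faces. This outsources the hand expansion to Proposition~\ref{prop:bianchi-main}.

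That outsourcing deserves a caveat. As printed, \eqref{eq:cube-consistency} has the wrong sign on the \((u,w)\) pair. Take \(g=1\) on the single edge \((I,u)\) and \(0\) elsewhere; then for \(\kappa=\kappa_g\) the printed left-hand side equals \(-2\). So Proposition~\ref{prop:bianchi-main} fails as printed, and the correct relation is
\[
\kappa(I;u,v)-\kappa(I\cup\{w\};u,v)-\kappa(I;u,w)+\kappa(I\cup\{v\};u,w)+\kappa(I;v,w)-\kappa(I\cup\{u\};v,w)=0.
\]
This is exactly the relation the paper's own displayed cube identity silently uses. Your argument needs only two things: curvature fields satisfy whatever relation defines cube consistency, and \(d\) enters with a nonzero coefficient. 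So it survives the correction verbatim, which is a real advantage over hand-tracked signs.

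A minor nit: maximality of \(w\) in \(K\) rules out only \(w\prec u\) and \(w\prec v\). The cases \(u\prec w\) and \(v\prec w\) are excluded because \(u,v\notin I\) while \(I\) is an ideal containing \(w\).
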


\begin{proof}
For an admissible edge \((I,a)\), define
\begin{equation}
\delta(I,a):=\bigl|\{b\in I:\ a<_{\tau} b\}\bigr|.
\label{eq:delta-recursion}
\end{equation}
We construct \(g^{(0)}(I,a)\) recursively on \(\delta(I,a)\).

\paragraph{Base case.}
If \(\delta(I,a)=0\), then \(a\) is the \(<_{\tau}\)-maximum element of \(I\cup\{a\}\), so
\[
(I,a)=((I\cup\{a\})^-,m(I\cup\{a\})).
\]
Define
\begin{equation}
g^{(0)}(I,a):=0.
\label{eq:zero-gauge-base}
\end{equation}

\paragraph{Inductive step.}
Assume \(g^{(0)}\) has been defined on all edges \((K,c)\) with \(\delta(K,c)<r\), and let \((I,a)\) satisfy \(\delta(I,a)=r\ge 1\). Let
\[
b:=m(I),\qquad K:=I\setminus\{b\}.
\]
We first verify that \((K;a,b)\) is a diamond. Since \(a<_{\tau}b\), one has \(a\parallel b\): if \(a\preceq b\), then \(a\in I\) because \(I\) is an ideal, contradiction; if \(b\preceq a\), then \(b<_{\tau}a\) because \(\tau\) extends \(\preceq\), contradiction. Next, \(b\in \mathcal A(K)\) because every predecessor of \(b\) lies in \(I\), and none equals \(b\). Also \(a\in \mathcal A(K)\) because every predecessor of \(a\) lies in \(I\), and \(b\notin \mathrm{Pred}(a)\). Thus \((K;a,b)\) is a diamond.

Now define
\begin{equation}
g^{(0)}(I,a):=g^{(0)}(K,a)+\kappa(K;a,b).
\label{eq:zero-gauge-recursion}
\end{equation}
This is valid because
\[
\delta(K,a)=\delta(I,a)-1.
\]

We now prove by induction on
\[
\Delta(I;u,v):=\bigl|\{b\in I:\ v<_{\tau} b\}\bigr|
\]
that the curvature of \(g^{(0)}\) on every diamond \((I;u,v)\), \(u<_{\tau}v\), equals \(\kappa(I;u,v)\).

\paragraph{Base case.}
Assume \(\Delta(I;u,v)=0\). Then \(v\) is the \(<_{\tau}\)-maximum element of \(I\cup\{u,v\}\), so both edges
\[
I\to I\cup\{v\},
\qquad
I\cup\{u\}\to I\cup\{u,v\}
\]
are reference-tree edges, hence have \(g^{(0)}\)-value \(0\). Therefore
\[
\kappa_{g^{(0)}}(I;u,v)=g^{(0)}(I\cup\{v\},u)-g^{(0)}(I,u).
\]
Applying the recursion \eqref{eq:zero-gauge-recursion} to \((I\cup\{v\},u)\) with \(K=I\) and \(b=v\) gives
\[
g^{(0)}(I\cup\{v\},u)=g^{(0)}(I,u)+\kappa(I;u,v).
\]
Hence \(\kappa_{g^{(0)}}(I;u,v)=\kappa(I;u,v)\).

\paragraph{Inductive step.}
Assume the claim whenever \(\Delta<r\), and let \(\Delta(I;u,v)=r\ge 1\). Set
\[
b:=m(I),\qquad K:=I\setminus\{b\}.
\]
Then \(u,v,b\) are pairwise incomparable. Applying \eqref{eq:zero-gauge-recursion} to the four non-tree edges in the curvature formula gives
\begin{align*}
g^{(0)}(I,v)&=g^{(0)}(K,v)+\kappa(K;v,b),\\
g^{(0)}(I\cup\{v\},u)&=g^{(0)}(K\cup\{v\},u)+\kappa(K\cup\{v\};u,b),\\
g^{(0)}(I,u)&=g^{(0)}(K,u)+\kappa(K;u,b),\\
g^{(0)}(I\cup\{u\},v)&=g^{(0)}(K\cup\{u\},v)+\kappa(K\cup\{u\};v,b).
\end{align*}
Hence
\begin{align*}
\kappa_{g^{(0)}}(I;u,v)
&=
\kappa_{g^{(0)}}(K;u,v)
+\kappa(K;v,b)+\kappa(K\cup\{v\};u,b)-\kappa(K;u,b)-\kappa(K\cup\{u\};v,b).
\end{align*}
By the induction hypothesis,
\[
\kappa_{g^{(0)}}(K;u,v)=\kappa(K;u,v).
\]
Cube consistency on the three-cube \((K;u,v,b)\) gives
\[
\kappa(I;u,v)
=
\kappa(K;u,v)
+\kappa(K;v,b)+\kappa(K\cup\{v\};u,b)-\kappa(K;u,b)-\kappa(K\cup\{u\};v,b).
\]
Comparing the two displays yields
\[
\kappa_{g^{(0)}}(I;u,v)=\kappa(I;u,v).
\]

This proves existence. For uniqueness, let \(\tilde g^{(0)}\) be another such field. Then
\[
h:=g^{(0)}-\tilde g^{(0)}
\]
has zero curvature, so by Theorem~\ref{thm:path-independence-main} there exists \(\varphi:J(P)\to\RR\) such that
\[
h(I,a)=\varphi(I\cup\{a\})-\varphi(I).
\]
Since \(h\) vanishes on every reference-tree edge,
\[
0=h(K^-,m(K))=\varphi(K)-\varphi(K^-)
\qquad \forall K\neq \varnothing.
\]
By induction along the reference tree, \(\varphi\) is constant on all ideals, hence \(h=0\). Therefore \(g^{(0)}=\tilde g^{(0)}\).
\end{proof}

\subsection{General gauge fixing}

\begin{lemma}[Tree integration]
\label{lem:tree-potential-app}
Let \(\alpha:J(P)\setminus\{\varnothing\}\to\RR\). Then there exists a unique function
\[
\psi:J(P)\to\RR
\]
such that
\begin{equation}
\psi(\varnothing)=0,
\qquad
\psi(K)-\psi(K^-)=\alpha(K)
\qquad
\forall K\neq \varnothing.
\label{eq:tree-integration}
\end{equation}
\end{lemma}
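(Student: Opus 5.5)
The plan is to define \(\psi\) recursively along the reference tree of Lemma~\ref{lem:tree-app}, and to obtain uniqueness from the same recursion.

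\textbf{Ordering.} Order the ideals by cardinality. For every nonempty ideal \(K\) one has \(|K^-|=|K|-1\). Moreover \(K^-\) is itself an ideal: the element \(m(K)\) is \(<_{\tau}\)-maximal in \(K\), and \(\tau\) extends \(\preceq\), so no element of \(K^-\) lies strictly above \(m(K)\). Removing it therefore preserves downward closure. This is already implicit in Lemma~\ref{lem:tree-app}, where \(K^-\) is a vertex of \(T_\tau\).

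\textbf{Existence.} Set \(\psi(\varnothing):=0\). For \(|K|\ge 1\), define
\[
\psi(K):=\psi(K^-)+\alpha(K),
\]
which is legitimate because \(\psi(K^-)\) has already been defined at the previous cardinality level. Equivalently, let \(\varnothing=K_0\to K_1\to\cdots\to K_n=K\) be the unique directed path from the root to \(K\) in \(T_\tau\), obtained by iterating \(K\mapsto K^-\). Then \(\psi(K)=\sum_{j=1}^n\alpha(K_j)\). By construction \(\psi\) satisfies \eqref{eq:tree-integration}.

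\textbf{Uniqueness.} Suppose \(\psi\) and \(\psi'\) both satisfy \eqref{eq:tree-integration}, and set \(h:=\psi-\psi'\). Then \(h(\varnothing)=0\) and \(h(K)=h(K^-)\) for every nonempty \(K\). Inducting on \(|K|\), or equivalently walking down the tree path to the root, gives \(h\equiv 0\).

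\textbf{Main point to check.} The only step needing care is that \(K^-\in J(P)\), so that the recursion stays inside the domain of \(\psi\). I will cite Lemma~\ref{lem:tree-app} for this, or repeat the one-line argument above. Everything else is routine induction on a rooted tree.
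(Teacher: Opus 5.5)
Your proposal is correct and follows the paper's proof: both define \(\psi\) by recursion along the reference tree \(T_\tau\), using the explicit sum of \(\alpha\) over the tree path from \(\varnothing\) to \(K\) (which is exactly the reference path \(\rho_{\varnothing,K}\)), and both get uniqueness from the same recursion. Your explicit check that \(K^-\) is an ideal is a detail the paper leaves implicit in Lemma~\ref{lem:tree-app}.
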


\begin{proof}
Existence and uniqueness follow by recursion along the rooted spanning tree \(T_\tau\) from Lemma~\ref{lem:tree-app}. Explicitly,
\[
\psi(K)=\sum_{L\in \rho_{\varnothing,K}} \alpha(L),
\]
where the sum is taken over the nonempty ideals visited by the unique path from \(\varnothing\) to \(K\) in \(T_\tau\).
\end{proof}

\begin{lemma}[Gradient shifts preserve curvature]
\label{lem:gradient-app}
Let \(g\) be an edge field and let \(\psi:J(P)\to\RR\). Define
\begin{equation}
(g+d\psi)(I,a):=g(I,a)+\psi(I\cup\{a\})-\psi(I).
\label{eq:gradient-shift}
\end{equation}
Then \(g+d\psi\) and \(g\) have the same curvature field.
\end{lemma}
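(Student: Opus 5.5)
The plan is a direct computation on a single diamond. The curvature field of an edge field is defined diamond-by-diamond through \eqref{eq:diamond-curvature}. It therefore suffices to fix an arbitrary diamond \(d=(I;u,v)\) with \(u<_{\tau}v\) and show that \(\kappa_{g+d\psi}(d)=\kappa_g(d)\).

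First, substitute the definition \eqref{eq:gradient-shift} into each of the four edge terms appearing in \eqref{eq:diamond-curvature}. This splits \(\kappa_{g+d\psi}(d)\) as \(\kappa_g(d)\) plus a correction consisting only of \(\psi\)-differences. The correction coming from the side \(I\to I\cup\{v\}\to I\cup\{u,v\}\) telescopes:
\[
\bigl(\psi(I\cup\{v\})-\psi(I)\bigr)+\bigl(\psi(I\cup\{u,v\})-\psi(I\cup\{v\})\bigr)=\psi(I\cup\{u,v\})-\psi(I).
\]
By the same telescoping, the side \(I\to I\cup\{u\}\to I\cup\{u,v\}\) contributes exactly the same endpoint difference \(\psi(I\cup\{u,v\})-\psi(I)\).

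Since \(\kappa\) is the difference of the two sides, the two \(\psi\)-contributions cancel. Hence \(\kappa_{g+d\psi}(d)=\kappa_g(d)\).

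Equivalently, one can argue structurally. The shift \(d\psi\) is itself an edge field of gradient form, so by Theorem~\ref{thm:path-independence-main} ((3)\(\Rightarrow\)(2)) its curvature vanishes. Curvature is linear in the edge field, which gives the same conclusion.

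The only point requiring care, and it is minor, is that all four edges involved are admissible, so that \(g+d\psi\) is evaluated on its domain. This holds because \(u,v\in\mathcal A(I)\) and \(u\parallel v\), which by Lemma~\ref{lem:boolean-interval-app} makes \(I\cup\{u\}\), \(I\cup\{v\}\), and \(I\cup\{u,v\}\) ideals. There is no real obstacle beyond this bookkeeping.
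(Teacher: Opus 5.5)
Your proposal is correct and follows the paper's proof: substitute \eqref{eq:gradient-shift} into the four edge terms of \eqref{eq:diamond-curvature}, and the $\psi$-differences along each boundary path telescope to $\psi(I\cup\{u,v\})-\psi(I)$ and cancel. Your alternative route, via (3)$\Rightarrow$(2) of Theorem~\ref{thm:path-independence-main} plus linearity of curvature in the edge field, is also valid and not circular, since that theorem does not use this lemma.
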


\begin{proof}
For a diamond \((I;u,v)\), \(u<_{\tau}v\),
\begin{align*}
\kappa_{g+d\psi}(I;u,v)
&=
(g+d\psi)(I,v)+(g+d\psi)(I\cup\{v\},u)\\
&\quad -(g+d\psi)(I,u)-(g+d\psi)(I\cup\{u\},v)\\
&=
\kappa_g(I;u,v)\\
&\quad
+\bigl[\psi(I\cup\{v\})-\psi(I)\bigr]
+\bigl[\psi(I\cup\{u,v\})-\psi(I\cup\{v\})\bigr]\\
&\quad
-\bigl[\psi(I\cup\{u\})-\psi(I)\bigr]
-\bigl[\psi(I\cup\{u,v\})-\psi(I\cup\{u\})\bigr]\\
&=
\kappa_g(I;u,v).
\end{align*}
\end{proof}

\begin{proof}[Proof of Theorem~\ref{thm:integrability-main}]
Let \(g^{(0)}\) be the zero-gauge field from Theorem~\ref{thm:zero-gauge-app}. By Lemma~\ref{lem:tree-potential-app}, there is a unique \(\psi\) with \(\psi(\varnothing)=0\) and \eqref{eq:tree-integration}. Define
\[
g:=g^{(0)}+d\psi.
\]
By Lemma~\ref{lem:gradient-app}, \(g\) has the same curvature field \(\kappa\) as \(g^{(0)}\). For every nonempty ideal \(K\),
\[
g(K^-,m(K))
=
g^{(0)}(K^-,m(K))+\psi(K)-\psi(K^-)
=
0+\alpha(K)
=
\alpha(K).
\]
Thus \(g\) satisfies both requirements.

For uniqueness, let \(\tilde g\) be another solution. Then
\[
h:=g-\tilde g
\]
has zero curvature and vanishes on the reference tree. By the uniqueness part of Theorem~\ref{thm:zero-gauge-app}, \(h=0\). Hence \(g=\tilde g\).
\end{proof}

\section{Proofs for Causal Identification}
\label{app:causal}

\begin{proof}[Proof of Proposition~\ref{prop:nonid-main}]
Let
\[
\gamma_u:
I\to I\cup\{u\}\to I\cup\{u,v\},
\qquad
\gamma_v:
I\to I\cup\{v\}\to I\cup\{u,v\}.
\]
By assumption,
\[
\Pr(\gamma_v\mid Z_t=(I,x))=0
\qquad \forall x\in B.
\]
Fix any model in the class satisfying \textbf{C1}--\textbf{C3}, and let \(Q^{(1)}_z(\cdot)\) denote one version of its statewise conditional counterfactual mean system.

Choose any bounded measurable function \(\Delta:B\to\RR\) that is not almost everywhere zero. Define a second counterfactual mean system \(Q^{(2)}\) by
\begin{equation}
Q^{(2)}_z(\gamma)=
\begin{cases}
Q^{(1)}_{(I,x)}(\gamma_v)+\Delta(x), & z=(I,x),\ x\in B,\ \gamma=\gamma_v,\\
Q^{(1)}_z(\gamma), & \text{otherwise}.
\end{cases}
\label{eq:nonid-perturbation}
\end{equation}
Keep the observational action law and the observational transition law unchanged.

Because \(\gamma_v\) has zero observational probability on \(B\), replacing its conditional mean by \eqref{eq:nonid-perturbation} does not alter any observed conditional expectation, any observed action probability, or any observed transition law. Hence the two models induce the same observational law. However,
\[
\kappa^{(2)}_{(I,x)}(I;u,v)
=
Q^{(2)}_{(I,x)}(\gamma_v)-Q^{(2)}_{(I,x)}(\gamma_u)
=
\kappa^{(1)}_{(I,x)}(I;u,v)+\Delta(x)
\]
for \(x\in B\). Therefore \(\kappa_{(I,x)}(I;u,v)\) is not point-identified on \(B\).
\end{proof}

\begin{proof}[Proof of Theorem~\ref{thm:gformula-main}]
Let
\[
\gamma=(a_0,\dots,a_{L-1}).
\]
For \(0\le b\le L\), write
\[
\gamma_{b:L-1}:=(a_b,\dots,a_{L-1})
\]
for the suffix of \(\gamma\) starting at stage \(b\). We prove by backward induction on \(b\) that
\begin{align}
Q_{z_b}(\gamma_{b:L-1})
=
&\int
\EE\!\left[
R
\mid
Z_L=z_L,\ A_b=a_b,\dots,A_{L-1}=a_{L-1},\ Z_b=z_b
\right] \times\\
&\times
\prod_{r=b}^{L-1} p(z_{r+1}\mid z_r,a_r)\,dz_{b+1:L}
\label{eq:gformula-backward}
\end{align}
for every \(z_b\) on the support of the intervention-induced state law.

For \(b=L\), the path suffix is empty and \eqref{eq:gformula-backward} reduces to the tautology
\[
Q_{z_L}(\varnothing)=\EE[R\mid Z_L=z_L,\ A_0=a_0,\dots,A_{L-1}=a_{L-1},\ Z_0=z_0].
\]

Assume \eqref{eq:gformula-backward} holds for \(b+1\). Then, using consistency and iterated expectation,
\begin{align*}
Q_{z_b}(\gamma_{b:L-1})
&=
\EE\!\left[
R^{\gamma_{b:L-1}}
\mid Z_b=z_b
\right]\\
&=
\int
\EE\!\left[
R^{\gamma_{b+1:L-1}}
\mid Z_{b+1}=z_{b+1},\ A_b=a_b,\ Z_b=z_b
\right]
p(z_{b+1}\mid z_b,a_b)\,dz_{b+1}.
\end{align*}
By reduced-state sequential ignorability,
\[
\EE\!\left[
R^{\gamma_{b+1:L-1}}
\mid Z_{b+1}=z_{b+1},\ A_b=a_b,\ Z_b=z_b
\right]
=
Q_{z_{b+1}}(\gamma_{b+1:L-1}).
\]
Applying the induction hypothesis to the right-hand side and substituting proves \eqref{eq:gformula-backward} for stage \(b\). Taking \(b=0\) yields \eqref{eq:gformula-supported-path}. Positivity ensures that all conditional laws are evaluated on the support of the intervention-induced state law.
\end{proof}

\begin{proof}[Proof of Theorem~\ref{thm:support-separation-main}]
Parts (1) and (2) follow by applying Theorem~\ref{thm:gformula-main} to the supported reference path and to the two supported sides of the diamond, respectively. For part (3), assume in addition \textbf{C4}. Then \(Q_z\) is edge-additive on admissible paths from the initial ideal, so Theorem~\ref{thm:reference-decomposition-main} applies pointwise to \(Q_z\). Substituting the identified quantities from parts (1)--(2) gives \eqref{eq:identified-full-decomposition}.
\end{proof}

\begin{proof}[Proof of Corollary~\ref{cor:policy-main}]
Because \(H<\infty\) and \(P\) is finite, \(\Gamma_H(I)\) is finite for every \(I\). By assumption, each candidate path has an identified value by Theorem~\ref{thm:support-separation-main}. Therefore the finite argmax set is identified. Deterministic tie-breaking makes the policy single-valued.
\end{proof}

\section{Proofs for Planning Consequences}
\label{app:planning}

\begin{lemma}[Inversion bound]
\label{lem:inversion-app}
Let \(\pi,\pi'\) be two linear extensions of the same \(L\)-element poset. Then the number of adjacent swaps needed to transform \(\pi\) into \(\pi'\) is at most \(\binom{L}{2}\).
\end{lemma}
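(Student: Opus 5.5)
The plan is to exhibit an explicit swap procedure whose length equals the number of inversions between \(\pi\) and \(\pi'\), and then bound that count. Let
\[
\mathrm{inv}(\pi,\pi'):=\bigl|\{\{x,y\}:\ x\ne y,\ x\text{ precedes }y\text{ in }\pi,\ y\text{ precedes }x\text{ in }\pi'\}\bigr|,
\]
the number of unordered pairs ordered differently by the two extensions. Since there are \(\binom{L}{2}\) unordered pairs in total, trivially \(\mathrm{inv}(\pi,\pi')\le\binom{L}{2}\). It therefore suffices to show that \(\pi\) can be transformed into \(\pi'\) by exactly \(\mathrm{inv}(\pi,\pi')\) adjacent swaps of neighboring incomparable elements, with every intermediate sequence a linear extension.

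First I observe that any inverted pair \(\{x,y\}\) is incomparable. If \(x\prec y\), both linear extensions would place \(x\) before \(y\), which contradicts the definition of an inverted pair.

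Next I show that, as long as \(\pi\ne\pi'\), some adjacent pair in \(\pi\) is inverted relative to \(\pi'\). Relabel elements by their positions in \(\pi'\). Then \(\pi\) is a permutation that is not the identity, so it has a descent, meaning two adjacent entries \(x,y\) in \(\pi\) with \(x\) before \(y\) in \(\pi\) but \(y\) before \(x\) in \(\pi'\). By the previous observation these are incomparable.

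Swapping \(x\) and \(y\) keeps the sequence a linear extension, because only the relative order of \(x,y\) changes and they are incomparable; this is the mechanism of Proposition~\ref{prop:swap-diamond-app}. The swap changes the relative order of exactly one pair, namely \(\{x,y\}\), which was inverted and now agrees with \(\pi'\). Every other pair keeps its relative order. So the swap decreases \(\mathrm{inv}\) by exactly one.

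Iterating, after \(\mathrm{inv}(\pi,\pi')\) swaps the count reaches \(0\). A sequence with zero inversions relative to \(\pi'\) orders every pair as \(\pi'\) does, and is therefore equal to \(\pi'\). This yields a swap sequence of length \(\mathrm{inv}(\pi,\pi')\le\binom{L}{2}\), so the minimum number of adjacent swaps is at most \(\binom{L}{2}\).

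The only step needing care is the second one, the existence of an adjacent inverted pair. I will argue it through the relabeling by \(\pi'\)-positions, which reduces it to the elementary fact that a non-identity permutation has a descent.

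An alternative route is to bound the inductive procedure of Lemma~\ref{lem:adj-swap-app} directly. Moving the last element of \(\pi'\) to the end costs at most \(L-1\) swaps, and recursing gives \(\sum_{k<L}k=\binom{L}{2}\). This could serve as a backup argument.
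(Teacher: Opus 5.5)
Your proof is correct and uses the same inversion-count argument as the paper, which observes only that the inversion count of \(\pi\) relative to \(\pi'\) is at most \(\binom{L}{2}\) and that each adjacent swap changes it by exactly one. Your version is more complete, because you also show that an adjacent inverted pair exists whenever \(\pi\neq\pi'\), and that such a pair is incomparable, so every intermediate sequence stays a linear extension; the paper leaves both facts implicit.
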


\begin{proof}
View \(\pi\) and \(\pi'\) as two permutations of the same labeled set. The inversion count of \(\pi\) relative to \(\pi'\) is at most \(\binom{L}{2}\). Each adjacent swap changes the inversion count by exactly \(1\), so at most \(\binom{L}{2}\) swaps are needed.
\end{proof}

\begin{proof}[Proof of Theorem~\ref{thm:order-bound-main}]
By Theorem~\ref{thm:diamond-representation-main}, there exists a diamond-swap sequence
\[
(d_1,\varepsilon_1),\dots,(d_M,\varepsilon_M)
\]
transforming \(\gamma\) into \(\gamma'\), with
\[
M=N_{\mathrm{swap}}(\gamma,\gamma').
\]
Under \textbf{C4}, the structural decomposition applies to \(Q_z\), so
\[
Q_z(\gamma')-Q_z(\gamma)=\sum_{m=1}^M \varepsilon_m\,\kappa_z(d_m).
\]
Therefore
\[
|Q_z(\gamma')-Q_z(\gamma)|
\le
\sum_{m=1}^M |\kappa_z(d_m)|
\le
M\varepsilon.
\]
This proves the first inequality in \eqref{eq:order-insensitivity-bound}. The second follows from Lemma~\ref{lem:inversion-app}, and the third from \(L\le H\).
\end{proof}

\begin{proof}[Proof of Theorem~\ref{thm:dp-main}]
The upward-oriented Hasse diagram of \(J(P)\) is acyclic because each edge increases cardinality by \(1\). Restrict to the finite induced subgraph consisting of all ideals reachable from \(I\) within at most \(H\) admissible additions. On this DAG, maximizing the path value is a longest-path problem.

Under \textbf{C4}, each candidate value satisfies
\[
Q_z(\gamma)=\sum_{e\in\gamma} g_z(e).
\]
Define recursively
\[
U_H(K):=0
\]
for every reachable ideal \(K\), and for \(h=H-1,H-2,\dots,0\),
\begin{equation}
U_h(K):=
\max\Bigl\{
0,\ 
\max_{a\in \mathcal A(K)}
\bigl[g_z(K,a)+U_{h+1}(K\cup\{a\})\bigr]
\Bigr\}.
\label{eq:dp-recursion}
\end{equation}
Then \(U_0(I)\) equals the optimal value over \(\Gamma_H(I)\), and any maximizing action at each recursion step yields an optimal path. Since the recursion is over a finite DAG, the argmax is nonempty and computable.
\end{proof}



\newpage

\begin{thebibliography}{99}\setlength{\itemsep}{0.2em}

\bibitem{birkhoff1937}
Garrett Birkhoff.
\newblock Rings of sets.
\newblock {Duke Mathematical Journal}, 3(3):443--454, 1937.

\bibitem{davey2002}
Brian A. Davey and Hilary A. Priestley.
\newblock {Introduction to Lattices and Order}.
\newblock Cambridge University Press, second edition, 2002.

\bibitem{doignon1999}
Jean-Paul Doignon and Jean-Claude Falmagne.
\newblock {Knowledge Spaces}.
\newblock Springer, 1999.

\bibitem{doignon2011}
Jean-Paul Doignon and Jean-Claude Falmagne.
\newblock {Learning Spaces}.
\newblock Springer, 2011.

\bibitem{hernanrobins2020}
Miguel A. Hern\'an and James M. Robins.
\newblock {Causal Inference: What If}.
\newblock Chapman \& Hall/CRC, 2020.

\bibitem{murphy2003}
Susan A. Murphy.
\newblock Optimal dynamic treatment regimes.
\newblock {Journal of the Royal Statistical Society: Series B}, 65(2):331--355, 2003.

\bibitem{murphy2005}
Susan A. Murphy.
\newblock An experimental design for the development of adaptive treatment strategies.
\newblock {Statistics in Medicine}, 24(10):1455--1481, 2005.

\bibitem{pruesse1994}
Gara Pruesse and Frank Ruskey.
\newblock Generating linear extensions fast.
\newblock {SIAM Journal on Computing}, 23(2):373--386, 1994.

\bibitem{qian2011}
Min Qian and Susan A. Murphy.
\newblock Performance guarantees for individualized treatment rules.
\newblock {The Annals of Statistics}, 39(2):1180--1210, 2011.

\bibitem{robins1986}
James M. Robins.
\newblock A new approach to causal inference in mortality studies with a sustained exposure period---application to control of the healthy worker survivor effect.
\newblock {Mathematical Modelling}, 7(9--12):1393--1512, 1986.

\bibitem{robins1987}
James M. Robins.
\newblock Addendum to ``A new approach to causal inference in mortality studies with a sustained exposure period---application to control of the healthy worker survivor effect''.
\newblock {Computers \& Mathematics with Applications}, 14(9--12):923--945, 1987.

\bibitem{rota1964}
Gian-Carlo Rota.
\newblock On the foundations of combinatorial theory I. Theory of M\"obius functions.
\newblock {Zeitschrift f\"ur Wahrscheinlichkeitstheorie und Verwandte Gebiete}, 2:340--368, 1964.

\bibitem{stanley2011ec1}
Richard P. Stanley.
\newblock {Enumerative Combinatorics, Volume 1}.
\newblock Cambridge University Press, second edition, 2011.

\bibitem{trotter1992}
William T. Trotter.
\newblock {Combinatorics and Partially Ordered Sets: Dimension Theory}.
\newblock Johns Hopkins University Press, 1992.

\end{thebibliography}
\end{document}